\documentclass[12pt, a4paper]{amsart}
\usepackage{graphicx} 
\usepackage{amsmath,amsthm,amssymb,latexsym,a4wide,tikz,multicol,tikz-cd, calc, bm}
\usepackage{tikz-qtree,tikz-qtree-compat}
\usepackage{mathtools,stmaryrd}
\usepackage[mathscr]{euscript}
\usepackage{float}

\usepackage[colorlinks]{hyperref}
\usepackage[margin=10pt,font=small,labelfont=bf, labelsep=period]{caption}
\usepackage{mathrsfs}
\usepackage{geometry} 
\usepackage[active]{srcltx}
\usepackage{cite}

\let\OLDthebibliography\thebibliography
\renewcommand\thebibliography[1]{
  \OLDthebibliography{#1}
  \setlength{\parskip}{0pt}
  \setlength{\itemsep}{0pt plus 0.2ex}
}

\usepackage{comment}
\usepackage{tikz}
\tikzset{font=\small}
\usepackage{enumerate}
\usetikzlibrary{matrix,arrows,automata,positioning,decorations.pathmorphing}
\usetikzlibrary{cd}
\usetikzlibrary{patterns}
\tikzset{main node/.style={circle,fill=white,draw,minimum size=0.1cm,inner sep=0pt},}

\newtheorem{theorem}{Theorem} [section]
\newtheorem{lemma}[theorem]{Lemma}

\newtheorem{proposition}[theorem]{Proposition}
\theoremstyle{definition}
\newtheorem{definition}[theorem]{Definition}

\newtheorem{example}[theorem]{Example}

\newcommand{\mx}[1]{#1^{\mathfrak{m}}}
\newcommand{\tr}{\operatorname{tr}}

\title[$F$-congruences on $F$-inverse monoids]{$F$-congruences on $F$-inverse monoids}

\usepackage[active]{srcltx}

\usepackage{enumerate}
\numberwithin{equation}{section}

\makeatletter
\def\l@subsection{\@tocline{2}{0pt}{2.5pc}{2.5pc}{}}
\makeatother

\begin{document}

\author{Ajda Lemut Furlani}
 \address{A. Lemut Furlani: Institute of Mathematics, Physics and Mechanics, Jadranska ulica 19, SI-1000 Ljubljana, Slovenia/ Faculty of Mathematics and Physics, Jadranska ulica 19, SI-1000 Ljubljana, Slovenia}
\email{ajda.lemut\symbol{64}imfm.si}
\thanks{The author was supported by the ARIS grant P1-0288.}

\sloppy

\begin{abstract} 
We study the class of congruences on $F$-inverse monoids that respect the unary operation $a\mapsto\mx{a}$, assigning to each element the maximum element of its $\sigma$-class, which we refer to as $F$-congruences.
We characterize kernel normal systems and congruence pairs on
$F$-inverse monoids which give rise to such congruences 
and establish a number of 
further related results including  a classification of $F$-congruence-free $F$-inverse monoids.

\vspace{0.1cm}

{\em Keywords}: Inverse semigroup; $F$-inverse monoid, Congruence;
$F$-congruence; Kernel normal system; Congruence pair.

\vspace{0.1cm}

{MSC2020:} 20M18, 
   08A30, 
   20M10. 
\end{abstract}

\maketitle

\section{Introduction}
The study of congruences on 
semigroups plays 
a crucial role in understanding their structure. Congruences on inverse semigroups have been characterized in terms of kernel normal systems by Preston \cite{Preston54} and congruence pairs by Petrich \cite{Petrich78}. Both of these approaches
extend the classical characterization of group congruences via normal subgroups. 

$F$-inverse monoids form an important class of inverse semigroups which arise naturally in various
mathematical contexts, see \cite{AKSz21} and the references therein. They form a variety in the enriched signature $(\cdot\,,{}^{-1},\mx{},1)$, as was observed by Kinyon in \cite{K18}, where the unary operation $\mx{}$ maps each element to the maximum element of its $\sigma$-class.
They have received much recent attention in the literature, see, e.g., \cite{AKSz21, KLF24, Sz23, DK26,KS26,ABO22}. 

The additional operation $\mx{}$ on $F$-inverse monoids yields a natural subclass of congruences that respect this operation, which we will refer to as $F$-congruences.
The aim of the present paper is to initiate the study of these congruences.

Specifically, we refine the characterizations of congruences on inverse semigroups by Preston and Petrich to the setting of $F$-congruences on $F$-inverse monoids
by determining the necessary and sufficient conditions for a kernel normal system or a congruence pair on an $F$-inverse monoid to give rise to an $F$-congruence (see Theorems~\ref{thm:fkernel} and \ref{thm:fpair}).
We also show that, apart from the universal congruence, a Rees congruence on an $F$-inverse monoid fails to be an $F$-congruence unless the monoid is a semilattice (see Proposition~\ref{prop:rees}).
We further describe the greatest $F$-congruence saturating a given subset and the greatest idempotent-separating $F$-congruence (see Propositions~\ref{prop:tildetau} and \ref{prop:tildemu}). 
Finally, motivated by the characterizations of congruence-free inverse semigroups of \cite{Trotter74,Baird75}, we investigate the 
$F$-congruence-free $F$-inverse monoids. We show that every such a monoid is congruence-free and describe all such monoids (see Theorem~\ref{thm:nocongfree}).

For the undefined notions in the inverse semigroup theory we refer the reader to \cite{Lawsonbook, Petrichbook}.

\section{Preliminaries}\label{sec:preliminaries}

\subsection{Inverse semigroups and \texorpdfstring{$F$}{F}-inverse monoids}

Recall that an {\em inverse semigroup} is an algebra $(S; \cdot\,, ^{-1})$ such that $(S;\cdot)$ is a semigroup and the following identities hold:
\begin{equation*}
xx^{-1}x=x,\quad(x^{-1})^{-1}=x \quad \text{and} \quad xx^{-1}yy^{-1}=yy^{-1}xx^{-1}.
\end{equation*}
In what follows we omit the signature from the notation and write simply $S$ instead of $(S; \cdot\,, ^{-1})$.

Let $S$ be an inverse semigroup.
A {\em congruence} $\rho$ on $S$ is an equivalence relation that respects the multiplication, that is, for all $a,b,c\in S$, $a\mathrel{\rho}b$ implies
$ac\mathrel{\rho}bc$ and $ ca\mathrel{\rho}cb.$
The following lemma is well known. We provide it with a proof for completeness. 
\begin{lemma} \label{lem:inversion}
Let $\rho$ be a congruence on $S$ and let $a\mathrel{\rho}b$ for some $a,b\in S$. Then $a^{-1}\mathrel{\rho}b^{-1}$.  
\end{lemma}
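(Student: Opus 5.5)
The plan is to show that $b^{-1}$ is $\rho$-related to an element that can be rewritten as $a^{-1}$ in a second way. Concretely, I will compare both $a^{-1}$ and $b^{-1}$ with the product $a^{-1}ba^{-1}$ and its symmetric variants, using only compatibility with multiplication and the inverse semigroup identities. The key tool is uniqueness of inverses in an inverse semigroup: for each $x$, the element $x^{-1}$ is the unique $y$ with $xyx=x$ and $yxy=y$.

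First, from $a \mathrel{\rho} b$, multiply on the left and right by $a^{-1}$ to get
\[
a^{-1}aa^{-1} \mathrel{\rho} a^{-1}ba^{-1},
\quad\text{that is,}\quad
a^{-1}\mathrel{\rho} a^{-1}ba^{-1}.
\]
Similarly, multiplying by $b^{-1}$ on both sides gives $b^{-1}ab^{-1}\mathrel{\rho} b^{-1}$. By compatibility and transitivity we then obtain the chain
\[
a^{-1}\mathrel{\rho} a^{-1}ba^{-1}\mathrel{\rho} a^{-1}aa^{-1}\cdots .
\]
This direct chain loops, so I will instead pass to the quotient. Since $\rho$ is a congruence, $S/\rho$ is a semigroup, and the map $S\to S/\rho$ is a homomorphism. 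The image of an inverse semigroup under a semigroup homomorphism is regular. In $S/\rho$, the class $a^{-1}\rho$ is an inverse of $a\rho$, and $b^{-1}\rho$ is an inverse of $b\rho = a\rho$.

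The main obstacle is uniqueness of inverses in $S/\rho$. This requires that idempotents of $S/\rho$ commute, which is not automatic because new idempotents could appear in the quotient. To avoid this, I will argue directly in $S$.

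Set $x=a^{-1}$ and $y=b^{-1}$, and use $e=aa^{-1}$, $f=bb^{-1}$, $e'=a^{-1}a$, $f'=b^{-1}b$. Compatibility gives
\[
aa^{-1}\mathrel{\rho} ba^{-1},\qquad ba^{-1}\mathrel{\rho} bb^{-1}\ \ (\text{using } a\mathrel{\rho} b \text{ on the left}).
\]
Hence $e\mathrel{\rho} f$, and likewise $e'\mathrel{\rho} f'$. Then
\[
a^{-1}= a^{-1}aa^{-1}\mathrel{\rho} a^{-1}bb^{-1}=a^{-1}f,
\]
and since $a^{-1}\mathrel{\rho} b^{-1}$ is what we want, we compute
\[
a^{-1}f \mathrel{\rho} f'a^{-1}f
\]
(from $a^{-1}=e'a^{-1}$ and $e'\mathrel{\rho} f'$). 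Replacing $a^{-1}$ in the middle using $a^{-1}\mathrel{\rho} b^{-1}$ would be circular, so instead we write
\[
f'a^{-1}f=b^{-1}(ba^{-1}b)b^{-1}
\]
and use $ba^{-1}b\mathrel{\rho} aa^{-1}a=a\mathrel{\rho} b$. This gives $f'a^{-1}f\mathrel{\rho} b^{-1}bb^{-1}=b^{-1}$.

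Chaining these relations yields $a^{-1}\mathrel{\rho} b^{-1}$. The only delicate point is checking the identity $f'a^{-1}f=b^{-1}ba^{-1}bb^{-1}$, which holds trivially by the definitions $f'=b^{-1}b$ and $f=bb^{-1}$. The rest of the argument uses only transitivity and compatibility of $\rho$ with multiplication.
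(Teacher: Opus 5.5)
Your closing chain
\[
a^{-1}=a^{-1}e\mathrel{\rho}a^{-1}f=e'a^{-1}f\mathrel{\rho}f'a^{-1}f=b^{-1}(ba^{-1}b)b^{-1}\mathrel{\rho}b^{-1}bb^{-1}=b^{-1}
\]
is correct \emph{provided} $e\mathrel{\rho}f$ and $e'\mathrel{\rho}f'$. However, these two relations are never actually established, so the proof has a gap.

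The step $ba^{-1}\mathrel{\rho}bb^{-1}$ is not an instance of compatibility applied to $a\mathrel{\rho}b$. Passing from $ba^{-1}$ to $bb^{-1}$ replaces $a^{-1}$ by $b^{-1}$, so it presupposes $a^{-1}\mathrel{\rho}b^{-1}$, which is the conclusion. Substituting via $a\mathrel{\rho}b$ inside $ba^{-1}$ only gives back $aa^{-1}$. Your ``likewise'' for $e'\mathrel{\rho}f'$ has the same defect, since it would need $a^{-1}b\mathrel{\rho}b^{-1}b$.

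This is not a cosmetic slip. As you remark yourself, the rest of your argument uses only $xx^{-1}x=x$ and compatibility, so commutativity of idempotents is never used, and the statement fails without it. Take the left zero semigroup $\{p,q,r\}$ (with $xy=x$) and the involution $p^{-1}=p$, $q^{-1}=r$, $r^{-1}=q$. The identities $xx^{-1}x=x$ and $(x^{-1})^{-1}=x$ hold, and every equivalence is a congruence. For the congruence with classes $\{p,q\}$ and $\{r\}$ we have $p\mathrel{\rho}q$, but $p^{-1}=p$ and $q^{-1}=r$ are not $\rho$-related. Correspondingly, your $e'=p$ and $f'=r$ are not $\rho$-related.

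The missing step is exactly where commuting idempotents must enter. From $a\mathrel{\rho}b$ you get
\[
aa^{-1}\mathrel{\rho}ba^{-1}=bb^{-1}ba^{-1}\mathrel{\rho}bb^{-1}aa^{-1}, \qquad bb^{-1}\mathrel{\rho}ab^{-1}=aa^{-1}ab^{-1}\mathrel{\rho}aa^{-1}bb^{-1},
\]
hence $e\mathrel{\rho}fe=ef\mathrel{\rho}f$. Symmetrically,
\[
a^{-1}a\mathrel{\rho}a^{-1}b=a^{-1}bb^{-1}b\mathrel{\rho}e'f', \qquad b^{-1}b\mathrel{\rho}b^{-1}a=b^{-1}aa^{-1}a\mathrel{\rho}f'e',
\]
hence $e'\mathrel{\rho}f'$. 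Insert this and delete the exploratory material about the first chain and the quotient $S/\rho$. Your argument then becomes a valid proof that differs from the paper's. The paper never proves $aa^{-1}\mathrel{\rho}bb^{-1}$ separately. Instead it links $a^{-1}$ and $b^{-1}$ to the common element $a^{-1}ab^{-1}$ through $a^{-1}ba^{-1}$ and $b^{-1}ab^{-1}$, using commutativity of idempotents once in each of the two computations.
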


\begin{proof}
We have \mbox{$a^{-1}=a^{-1}aa^{-1} \mathrel{\rho}a^{-1}ba^{-1}$.} Furthermore, 
$$
a^{-1}ba^{-1} = a^{-1}bb^{-1}ba^{-1} \mathrel{\rho} a^{-1}bb^{-1}aa^{-1}=a^{-1}aa^{-1}bb^{-1}=a^{-1}bb^{-1}\mathrel{\rho} a^{-1}ab^{-1}.  
$$
Similarly, $b^{-1}= b^{-1}bb^{-1}  \mathrel{\rho}b^{-1}ab^{-1}$ and 
$$
b^{-1}ab^{-1} =b^{-1}aa^{-1}ab^{-1}\mathrel{\rho}b^{-1}ba^{-1}ab^{-1} =a^{-1}ab^{-1}bb^{-1} =a^{-1}ab^{-1}.
$$
Hence $a^{-1}\mathrel{\rho}b^{-1}$.     
\end{proof}

The set $E(S)$ of idempotents of $S$ is a semilattice where $e\leq f$ if and only if $e = ef=fe$. The {\em natural partial order} on $S$ is defined by $s \leq t$ if and only if there exists $e \in E(S)$ such that $s=et$, or equivalently, there exists $f \in E(S)$ such that $s = tf$. By $\sigma$ we denote the {\em minimum group congruence on $S$}, that is, the minimum congruence on $S$ such that $S/\sigma$ is a group.
It is known that $s \mathrel{\sigma}t$ if and only if there exists $u \in S$, such that $u\leq s,t$, or, equivalently, if there exists some $e \in E(S)$ such that $es=et$.

An inverse semigroup $S$ is called {\em $E$-unitary} if, whenever $e\in E(S)$ and $e\leq s$, it follows that $s\in E(S)$. Equivalently, $S$ is $E$-unitary if $E(S)$ is a $\sigma$-class.
We say that $S$ is {\em $F$-inverse} if every $\sigma$-class has a maximum element with respect to the natural partial order.    
Every $F$-inverse semigroup is $E$-unitary and is a monoid, whose identity element is the maximum idempotent. 
$F$-inverse monoids form a variety of algebras in the signature $(\cdot\,,^{-1},\mx{},1)$, where the unary operation $\mx{}$  maps an element $x$ to the maximum element $\mx{x}$ of its $\sigma$-class.
This variety is defined by the identities defining the variety of inverse monoids together with the 
identities:
\begin{equation*}
x = \mx{x}x^{-1}x\quad \text{and} \quad \mx{x} = \mx{(xy^{-1}y)}.
\end{equation*}

In the sequel, we will need the following lemma.

\begin{lemma}{\cite[Lemma~3.3]{AKSz21}}\label{lem:Fid2}
Every $F$-inverse monoid satisfies the following identities:
\begin{enumerate}
\item $(\mx{x})^{-1} = \mx{(x^{-1})}$;
\item $\mx{x}\mx{y} = \mx{(xy)}(\mx{y})^{-1}\mx{y} = \mx{x}(\mx{x})^{-1}\mx{(xy)}$;
\item $\mx{(x_0\mx{y_1}x_1\cdots x_{n-1}\mx{y_n}x_n)}= \mx{(x_0y_1x_1\cdots x_{n-1}y_nx_n)}$ for every $n\in\mathbb{N}$.
\end{enumerate}    
\end{lemma}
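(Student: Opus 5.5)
The three identities can be derived directly from the defining properties of an $F$-inverse monoid. The tools are: $\mx{x}$ is the maximum of the $\sigma$-class of $x$; $\sigma$ is a congruence; and the natural partial order is compatible with multiplication and inversion. In each case I will show that two elements lie in the same $\sigma$-class and then compare them with the maximum of that class.

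For (1), inversion is an order automorphism of $(S,\leq)$, and by Lemma~\ref{lem:inversion} it maps $\sigma$-classes onto $\sigma$-classes. Since $a\mathrel{\sigma} x$ if and only if $a^{-1}\mathrel{\sigma} x^{-1}$, the $\sigma$-class of $x^{-1}$ is the image under inversion of the $\sigma$-class of $x$. An order automorphism sends maximum elements to maximum elements, so $(\mx{x})^{-1}=\mx{(x^{-1})}$.

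For (3), I will first check that $\mx{y}\mathrel{\sigma}y$, which holds by definition. Because $\sigma$ is a congruence, this gives
\[
x_0\mx{y_1}x_1\cdots \mx{y_n}x_n \mathrel{\sigma} x_0y_1x_1\cdots y_nx_n .
\]
Related elements have the same maximum in their common $\sigma$-class, so their $\mx{}$-values coincide. This part is immediate.

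For (2), note that $\mx{x}\mx{y}\mathrel{\sigma}xy$, so $\mx{x}\mx{y}\leq \mx{(xy)}$ by maximality. Using $s\leq t \Rightarrow s=t s^{-1}s$, and observing that $(\mx{x}\mx{y})^{-1}\mx{x}\mx{y}$ lies below $(\mx{y})^{-1}\mx{y}$, this yields $\mx{x}\mx{y}=\mx{(xy)}e$ with $e=(\mx{x}\mx{y})^{-1}\mx{x}\mx{y}\leq(\mx{y})^{-1}\mx{y}$. So $\mx{x}\mx{y}\leq \mx{(xy)}(\mx{y})^{-1}\mx{y}$.

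The reverse inequality is the main obstacle. I would write
\[
\mx{(xy)}(\mx{y})^{-1}\mx{y}=\bigl(\mx{(xy)}(\mx{y})^{-1}\bigr)\mx{y}.
\]
By (3) and (1), $\mx{(xy)}(\mx{y})^{-1}\mathrel{\sigma} xy y^{-1}\mathrel{\sigma} x$, since $yy^{-1}$ is idempotent and $\sigma$ identifies $xyy^{-1}$ with $x$. Hence $\mx{(xy)}(\mx{y})^{-1}\leq \mx{x}$, and multiplying on the right by $\mx{y}$ preserves the order. This gives $\mx{(xy)}(\mx{y})^{-1}\mx{y}\leq\mx{x}\mx{y}$, and so equality holds.

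The second equality in (2) follows symmetrically. One can either apply (1) to the inverses, using $(\mx{x}\mx{y})^{-1}=\mx{(y^{-1})}\mx{(x^{-1})}$, or run the same argument on the left with $\mx{x}(\mx{x})^{-1}$.
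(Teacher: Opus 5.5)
Your proof is correct. The paper gives no proof of this lemma; it is quoted from \cite[Lemma~3.3]{AKSz21}, so there is no in-paper argument to compare with. What you give is the natural self-contained proof from the definition of $\mx{x}$ as the maximum of the $\sigma$-class of $x$. In each part you show that two elements are $\sigma$-related or comparable, then use maximality together with the compatibility of $\leq$ with multiplication and inversion. All steps check out, including $s\leq t\Rightarrow s=ts^{-1}s$ and the bound $(\mx{x}\mx{y})^{-1}\mx{x}\mx{y}\leq(\mx{y})^{-1}\mx{y}$.

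Two small simplifications:
\begin{enumerate}
\item The first inequality in (2) needs no auxiliary idempotent $e$. Right-multiplying $\mx{x}\mx{y}\leq\mx{(xy)}$ by the idempotent $(\mx{y})^{-1}\mx{y}$ gives $\mx{x}\mx{y}=\mx{x}\mx{y}(\mx{y})^{-1}\mx{y}\leq\mx{(xy)}(\mx{y})^{-1}\mx{y}$ at once.
\item In the reverse inequality you do not actually use (3). The relation $\mx{(xy)}(\mx{y})^{-1}\mathrel{\sigma}xyy^{-1}$ follows from $\mx{z}\mathrel{\sigma}z$, from (1) (or Lemma~\ref{lem:inversion}), and from $\sigma$ being a congruence. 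Then $xyy^{-1}\mathrel{\sigma}x$ holds because $xyy^{-1}\leq x$.
\end{enumerate}
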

We also record the following known observation. 
\begin{lemma}\label{lem:13j}
Let $S$ be an $F$-inverse monoid and $a,b\in S$. Then
$\mx{(ab)}\geq\mx{a}\mx{b}$.
\end{lemma}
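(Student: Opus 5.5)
The plan is to show that $\mx{a}\mx{b}$ lies in the $\sigma$-class of $ab$; the inequality then follows directly from the definition of $\mx{(ab)}$ as the maximum element of that class.

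First I will show $\mx{a}\mathrel{\sigma} a$ and $\mx{b}\mathrel{\sigma} b$. This holds by definition, since $\mx{x}$ is the maximum of the $\sigma$-class of $x$. Alternatively, the identity $x=\mx{x}x^{-1}x$ gives $x\le \mx{x}$, and comparable elements are $\sigma$-related. Because $\sigma$ is a congruence, it follows that $\mx{a}\mx{b}\mathrel{\sigma} ab$.

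Second, $\mx{(ab)}$ is by definition the maximum element of the $\sigma$-class of $ab$ with respect to the natural partial order. That class contains $\mx{a}\mx{b}$, so $\mx{a}\mx{b}\le \mx{(ab)}$.

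There is also an identity-based route via Lemma~\ref{lem:Fid2}(2), which gives $\mx{a}\mx{b}=\mx{(ab)}(\mx{b})^{-1}\mx{b}$. Since $(\mx{b})^{-1}\mx{b}$ is an idempotent, this exhibits $\mx{a}\mx{b}$ as $\mx{(ab)}f$ with $f\in E(S)$, which is the natural partial order directly.

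There is no real obstacle here. The only point to check is that maximality is taken with respect to the natural partial order, and that holds by the definition of $F$-inverse.
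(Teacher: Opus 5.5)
Your proof is correct and is essentially the paper's argument: $\mx{a}\mx{b}\mathrel{\sigma}ab$ because $\sigma$ is a congruence, and $\mx{(ab)}$ is the maximum of that $\sigma$-class. Your alternative route via Lemma~\ref{lem:Fid2}(2), writing $\mx{a}\mx{b}=\mx{(ab)}(\mx{b})^{-1}\mx{b}$, is also valid and gives the inequality directly from the definition of the natural partial order.
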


\begin{proof}
Since $\mx{(ab)}\mathrel{\sigma}ab\mathrel{\sigma}\mx{a}\mx{b}$ and  $\mx{(ab)}$ is the maximum element of the $\sigma$-class of $ab$, we have $\mx{(ab)}\geq\mx{a}\mx{b}$.
\end{proof}

\subsection{Kernel normal systems}\label{subsec:kns}
In this section we summarize the characterizations of congruences on inverse semigroups in terms of kernel normal systems due to Preston \cite{Preston54}. We use the terminology and notation of \cite{Petrichbook}.

Let $S$ be an inverse semigroup. 
A family $\mathscr{K}$ of pairwise disjoint inverse subsemigroups of $S$ is called a {\em kernel normal system} for $S$ if it satisfies the following conditions:
\begin{enumerate}
\item[(K1)] $E(S)\subseteq \bigcup_{K\in \mathscr{K}} K$;
\item[(K2)] for each $a\in S$ and $K\in \mathscr{K}$, there exists $L\in \mathscr{K}$, such that $a^{-1}Ka\subseteq L$;
\item[(K3)] if $a,ab,bb^{-1}\in K$ for some $K\in \mathscr{K}$ and $a,b\in S$, then $b\in K$.
\end{enumerate}
Note that if $S$ is a group, then the kernel normal system for $S$ consists of a single normal subgroup.
For a kernel normal system $\mathscr{K}$ for $S$, define the relation $\xi_\mathscr{K}$ on $S$ by 
$$
\forall a,b\in S: \quad a\mathrel{\xi_\mathscr{K}}b \quad\Leftrightarrow \quad  aa^{-1},\quad bb^{-1},\quad ab^{-1}\in K \text{ for some } K\in \mathscr{K}.
$$
Next, let $\rho$ be a congruence on $S$ and define 
$$\mathscr{K}(\rho)=\{[e]_\rho \colon e\in E(S)\}.$$

We now state Preston's characterization of 
congruences on an inverse semigroup.

\begin{theorem}{\cite[Theorem~1]{Preston54}}\label{thm:preston}
Let $S$ be an inverse semigroup. 
\begin{enumerate}
\item If $\mathscr{K}$ is a kernel normal system for $S$, then $\xi_\mathscr{K}$ is a congruence on $S$.
\item If $\rho$ is a congruence on $S$, then $\mathscr{K}(\rho)$ is a kernel normal system for $S$.
\item The map $\rho\mapsto \mathscr{K}(\rho)$ is a bijection between congruences and kernel normal systems for $S$ with the inverse $\mathscr{K}\mapsto \xi_\mathscr{K}$.
\end{enumerate}
\end{theorem}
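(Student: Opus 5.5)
I will prove the three parts in the order (2), (1), (3), since (2) is the most direct and its computations are reused later.

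For (2), let $\rho$ be a congruence. The classes $[e]_\rho$ for $e\in E(S)$ are pairwise disjoint because they are equivalence classes, and (K1) holds trivially. Each $K=[e]_\rho$ is an inverse subsemigroup. If $a,b\in K$, then $ab\mathrel{\rho}ee=e$. By Lemma~\ref{lem:inversion}, $a^{-1}\mathrel{\rho}e^{-1}=e$. For (K2), if $x\in[e]_\rho$ then $a^{-1}xa\mathrel{\rho}a^{-1}ea$, and $a^{-1}ea$ is an idempotent, so $a^{-1}Ka\subseteq[a^{-1}ea]_\rho$. For (K3), suppose $a,ab,bb^{-1}\in K=[e]_\rho$. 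Then
$$b\mathrel{\rho}bb^{-1}b\mathrel{\rho}eb\mathrel{\rho}ab\mathrel{\rho}e,$$
where $eb\mathrel{\rho}ab$ because $a\mathrel{\rho}e$. Hence $b\in K$.

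For (1), let $\mathscr K$ be a kernel normal system and write $\xi=\xi_\mathscr K$.

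\emph{Equivalence relation.} Reflexivity: $aa^{-1}$ lies in some $K$ by (K1), and $K$ is closed under products, so the defining condition holds. Symmetry follows because $K$ is closed under inverses and $(ab^{-1})^{-1}=ba^{-1}$. Transitivity is harder. Suppose $aa^{-1},bb^{-1},ab^{-1}\in K$ and $bb^{-1},cc^{-1},bc^{-1}\in K'$. Since $bb^{-1}\in K\cap K'$ and the members of $\mathscr K$ are disjoint, $K=K'$. Then $ab^{-1}bc^{-1}\in K$, and I need to remove the middle idempotent $b^{-1}b$ to get $ac^{-1}\in K$. This uses (K3): take $x=ab^{-1}$ and $y=bc^{-1}$, and use that $K$ is an inverse subsemigroup containing idempotents such as $cc^{-1}$.

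\emph{Compatibility.} For right compatibility, suppose $a\mathrel{\xi}b$ via $K$ and let $c\in S$. By (K2), $c^{-1}Kc\subseteq L$, so $c^{-1}aa^{-1}c$, $c^{-1}bb^{-1}c$ and $c^{-1}ab^{-1}c$ all lie in $L$. These are not yet $(ac)(ac)^{-1}$ etc. The trick is to first show a lemma: $a\mathrel{\xi}b$ if and only if $a^{-1}\mathrel{\xi}b^{-1}$, equivalently $a^{-1}a,\ b^{-1}b,\ a^{-1}b$ lie in a common member of $\mathscr K$. This is the same kind of (K3) manipulation as in transitivity. Then right multiplication is handled by conjugating with (K2) in the dual form, and left multiplication is symmetric.

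The main obstacle will be these (K3) manipulations. I expect to prove an auxiliary lemma first: if $K\in\mathscr K$, $e\in E(K)$ and $ex\in K$ with $xx^{-1}\in K$, then $x\in K$. All the needed identities would then follow from it.

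For (3), I must show $\xi_{\mathscr K(\rho)}=\rho$ and $\mathscr K(\xi_\mathscr K)=\mathscr K$.

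\emph{First identity.} If $a\mathrel{\rho}b$, then $aa^{-1}\mathrel{\rho}ab^{-1}\mathrel{\rho}bb^{-1}$, and these are idempotent-related, so all three lie in one class $[aa^{-1}]_\rho$. Conversely, if $aa^{-1},bb^{-1},ab^{-1}\in[e]_\rho$, then
$$a=aa^{-1}a\mathrel{\rho}ab^{-1}a,$$
and a symmetric computation, together with $a^{-1}b\mathrel\rho a^{-1}a\mathrel\rho b^{-1}b$ from the lemma above, gives $a\mathrel\rho b$.

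\emph{Second identity.} I must check that the $\xi_\mathscr K$-class of an idempotent $e\in K$ is exactly $K$. If $x\in K$, then $xx^{-1},ee,xe\in K$, so $x\mathrel\xi e$. Conversely, if $x\mathrel\xi e$, then $xx^{-1},xe\in K$, and (K3) with $a=xe$ recovers $x\in K$ via $x=xx^{-1}x$.
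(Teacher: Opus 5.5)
The paper gives no proof of this statement; it is quoted from Preston \cite{Preston54}, so your proposal has to stand on its own. Part (2) is complete and correct, and the outline of (3) is fine. Part (1), however, is the only substantial part, and it is not actually proved; your hint for transitivity does not work.

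The element $ab^{-1}bc^{-1}=(ab^{-1})(bc^{-1})\in K$ equals $(ab^{-1}ba^{-1})\,ac^{-1}$. So (K3), with the idempotent $ab^{-1}ba^{-1}\in K$ as first element, yields $ac^{-1}\in K$ \emph{provided} $ac^{-1}(ac^{-1})^{-1}=ac^{-1}ca^{-1}\in K$. That idempotent is a conjugate of $c^{-1}c$, while the hypotheses only involve $aa^{-1}$, $bb^{-1}$, $cc^{-1}$. The idempotent $cc^{-1}$ you mention is not the relevant one, and (K3) with $x=ab^{-1}$, $y=bc^{-1}$ merely returns $bc^{-1}\in K$, which you already know. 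So transitivity already requires your inverse lemma ($a^{-1}a$, $b^{-1}b$, $a^{-1}b$ in a common member). You leave that lemma unproved and claim it follows from your auxiliary lemma. It does not: the auxiliary lemma is just (K3) with $a$ idempotent, whereas the inverse lemma uses (K2) and the disjointness of the members of $\mathscr{K}$ in an essential way.

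Here is one way to close the gap. By (K3) with $a=xx^{-1}$, $b=y$: if $x\in K$, $x\le y$ and $yy^{-1}\in K$, then $y\in K$. Applied to $x^{-1}\le y^{-1}$, the same holds with $y^{-1}y\in K$ in place of $yy^{-1}\in K$.

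Now let $aa^{-1},bb^{-1},ab^{-1}\in K$. By (K2), $a^{-1}Ka\subseteq L$ and $b^{-1}Kb\subseteq L'$ for some $L,L'\in\mathscr{K}$. Put $w=a^{-1}b$.
\begin{itemize}
\item $ww^{-1}=a^{-1}(bb^{-1})a\in L$, and $wa^{-1}a=a^{-1}(ba^{-1})a\in L$ with $wa^{-1}a\le w$, so $w\in L$.
\item $w^{-1}w=b^{-1}(aa^{-1})b\in L'$, and $b^{-1}bw=b^{-1}(ba^{-1})b\in L'$ with $b^{-1}bw\le w$, so $w\in L'$.
\end{itemize}
Hence $L=L'$ by disjointness, and this member contains $a^{-1}a$, $b^{-1}b$ and $a^{-1}b$. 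The converse follows by applying this to $a^{-1},b^{-1}$.

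In transitivity, the lemma puts $a^{-1}a$ and $c^{-1}c$ into one member $L$. By (K2) for $a^{-1}$, the set $aLa^{-1}$ lies in a member containing $a(a^{-1}a)a^{-1}=aa^{-1}\in K$, hence in $K$. So $ac^{-1}ca^{-1}\in K$, and the argument above concludes.

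Compatibility then goes as you describe, except that left multiplication is the direct case: $caa^{-1}c^{-1}$, $cbb^{-1}c^{-1}$, $cab^{-1}c^{-1}\in cKc^{-1}$, and you use (K2) for $c^{-1}$. Right multiplication is the one that needs the lemma.

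There are also two small slips in (3):
\begin{itemize}
\item The last step should apply (K3) with $a=xex^{-1}=(xe)(xe)^{-1}$ and $b=x$ (so $ab=xe$), not with $a=xe$.
\item For $\mathscr{K}(\xi_{\mathscr{K}})=\mathscr{K}$ you need that each member contains an idempotent $kk^{-1}$.
\end{itemize}
The first identity in (3) can also be done without the lemma. In $S/\rho$, $\bar a\bar a^{-1}=\bar a\bar b^{-1}=\bar b\bar b^{-1}$ gives $\bar a=\bar b\,\bar a^{-1}\bar a\le\bar b$ and $\bar b=\bar a\,\bar b^{-1}\bar b\le\bar a$.
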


We will also need the following observation from \cite{Preston54}.

\begin{lemma}\label{lem:KNS}
Let $\mathscr{K}$ be a kernel normal system for $S$. Then, for all $a,b \in \bigcup_{K \in \mathscr{K}} K$,
$$
a \mathrel{\xi_\mathscr{K}} b  \quad\Leftrightarrow \quad a,b \in K \text{ for some } K \in \mathscr{K}.$$
\end{lemma}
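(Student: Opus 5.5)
We need to prove Lemma~\ref{lem:KNS}: for $a,b\in\bigcup_{K\in\mathscr{K}}K$, we have $a\mathrel{\xi_\mathscr{K}}b$ if and only if $a,b$ lie in a common $K\in\mathscr{K}$. By Theorem~\ref{thm:preston}, $\xi_\mathscr{K}$ is a congruence and $\mathscr{K}(\xi_\mathscr{K})=\mathscr{K}$. So every $K\in\mathscr{K}$ is a $\xi_\mathscr{K}$-class of some idempotent, and conversely every such class is a member of $\mathscr{K}$. The plan is to work with this description rather than with the defining formula.

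For the backward direction, suppose $a,b\in K$. Since $K=[e]_{\xi_\mathscr{K}}$ for some $e\in E(S)$, both $a$ and $b$ are $\xi_\mathscr{K}$-related to $e$. By transitivity, $a\mathrel{\xi_\mathscr{K}}b$.

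Alternatively, one can argue directly from the definition. Since $K$ is an inverse subsemigroup, $aa^{-1}$, $bb^{-1}$ and $ab^{-1}$ all lie in $K$, and this is exactly the condition defining $a\mathrel{\xi_\mathscr{K}}b$.

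For the forward direction, suppose $a\mathrel{\xi_\mathscr{K}}b$, with $a\in K$ and $b\in L$ for some $K,L\in\mathscr{K}$. Using again that each member of $\mathscr{K}$ is a $\xi_\mathscr{K}$-class, $K$ is the class of $a$ and $L$ is the class of $b$. Since $a\mathrel{\xi_\mathscr{K}}b$, these classes coincide, so $K=L$.

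The only delicate point is the identification $\mathscr{K}(\xi_\mathscr{K})=\mathscr{K}$. It is supplied by part (3) of Theorem~\ref{thm:preston}, which states that the two maps are mutually inverse. A direct proof avoiding this would need (K3) together with the disjointness of the members of $\mathscr{K}$, and that is where the real work would lie.
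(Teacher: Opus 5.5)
Your proof is correct, and your second argument for the backward direction is exactly the paper's. The forward direction takes a different route. You derive it from part (3) of Theorem~\ref{thm:preston}: since $\mathscr{K}(\xi_\mathscr{K})=\mathscr{K}$, every member of $\mathscr{K}$ is a $\xi_\mathscr{K}$-class, and the claim follows at once. The paper argues from the axioms instead. If $a\in K$, then the member of $\mathscr{K}$ witnessing $a\mathrel{\xi_\mathscr{K}}b$ contains $aa^{-1}\in K$, so by disjointness it is $K$. Hence $bb^{-1},ab^{-1}\in K$, and the paper concludes $b\in K$.

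Your route is shorter, and it shows that the hypothesis $b\in\bigcup_{K\in\mathscr{K}}K$ is unnecessary: from $b\mathrel{\xi_\mathscr{K}}a$ and $K=[a]_{\xi_\mathscr{K}}$ you get $b\in K$ directly. The price is that it depends on the full bijection of Preston's theorem. That is legitimate here because the paper cites the theorem, but the direct argument is elementary and independent of it, so it could also be used inside a proof of that theorem.

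Your closing remark overstates how hard the direct argument is. If $b\in L$ for some $L\in\mathscr{K}$, then $bb^{-1}$ lies in both $L$ and $K$, so disjointness alone gives $L=K$; (K3) is not needed. The paper does invoke (K3) at this point, to get $b^{-1}\in K$. Strictly, that application needs $b^{-1}b\in K$, which is itself supplied by this same disjointness observation.
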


\begin{proof}
If $a,b \in K$ for some $K \in \mathscr{K}$, then, since $K$ is an inverse subsemigroup, we have 
$aa^{-1},\, bb^{-1},\, ab^{-1} \in K$ and thus $a \mathrel{\xi_\mathscr{K}} b$.
Conversely, suppose $a,b \in \bigcup_{K \in \mathscr{K}} K$ and $a \mathrel{\xi_\mathscr{K}} b$. Let $a \in K$ for some $K \in \mathscr{K}$. 
Then $aa^{-1}\in K$ and since $a \mathrel {\xi_\mathscr{K}} b $, also $bb^{-1},ab^{-1}\in K$. Applying the condition (K3), we obtain $b^{-1}\in K$ and thus also $b\in K$.   
\end{proof}

\subsection{Congruence pairs}\label{subsec:cp}
We now summarize the characterization of congruences on inverse semigroups in terms of congruence pairs introduced by Petrich \cite{Petrich78}.
As in the previous section, we follow \cite{Petrichbook}.

Let $S$ be an inverse semigroup and $\rho$ a congruence on $S$. 
The {\em trace of $\rho$}, denoted $\tr\rho$, is the restriction of $\rho$ to the set $E(S)$ and the {\em kernel of $\rho$} is defined by
$$
\ker\rho = \{a \in S \colon a \mathrel{\rho} e \text{ for some } e \in E(S)\}.
$$
If $S$ is a group, 
$\tr\rho$ is the trivial congruence on $E(S)=\{1\}$, and the kernel is a normal subgroup.
A subsemigroup $K$ of $S$ is called a {\em  normal subsemigroup} if $E(S) \subseteq K$ and $aKa^{-1} \subseteq K$ for all $a \in S$. A congruence $\tau$ on $E(S)$ is called a {\em normal congruence} if for all $e,f\in E(S)$ and $a \in S$, $e \mathrel{\tau} f$ implies  $a^{-1}ea \mathrel{\tau} a^{-1}fa$.
Finally, a pair $(K,\tau)$ is said to be a {\em congruence pair} for $S$  if it satisfies the following conditions:
\begin{enumerate}
\item[(P1)] $K$ is a normal subsemigroup of $S$;
\item[(P2)] $\tau$ is a normal congruence on $E(S)$ and the following conditions hold:
    \begin{enumerate}[(i)]
        \item for all $a\in S$, $e\in E(S)$, if $ae \in K$ and $e \mathrel{\tau} a^{-1}a$, then $a \in K$;
        \item for all $a\in S$, if $a \in K$, then $a^{-1}a \mathrel{\tau} a^{-1}a$.
    \end{enumerate}
\end{enumerate}
For a congruence pair $(K,\tau)$ for $S$, define the relation $\rho_{(K,\tau)}$ on $S$ by
$$
\forall a,b\in S: \quad a \mathrel{\rho_{(K,\tau)}}b 
\quad \Leftrightarrow  \quad
a^{-1}a \mathrel{\tau} b^{-1}b 
\quad \text{and} \quad
ab^{-1} \in K.
$$

The following characterization of congruences on inverse semigroups is due to Petrich.

\begin{theorem}{\cite[Theorem 4.4]{Petrich78}}\label{thm:Petrich}
Let $S$ be an inverse semigroup.
\begin{enumerate}
\item If $(K,\tau)$ is a congruence pair for $S$, then $\rho_{(K,\tau)}$ is a congruence on $S$.
\item If $\rho$ is a congruence on $S$, then $(\ker\rho, \tr\rho)$ is a congruence pair for $S$.
\item The map $\rho\mapsto (\ker\rho, \tr\rho)$ is a bijection between congruences and congruence pairs for $S$ with the inverse $(K,\tau)\mapsto \rho_{(K,\tau)}$.
\end{enumerate}
\end{theorem}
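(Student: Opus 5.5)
This theorem is quoted from Petrich \cite{Petrich78} and is used here as a black box, but I would prove it via Preston's Theorem~\ref{thm:preston}. First I would note that (P2)(ii) as printed is a tautology and should read: if $a\in K$ then $aa^{-1}\mathrel{\tau}a^{-1}a$. I use that form throughout.

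\textbf{Part (2).} Let $\rho$ be a congruence, $K=\ker\rho$ and $\tau=\tr\rho$.
\begin{enumerate}[(i)]
\item $K$ contains $E(S)$. It is closed under products, since $a\mathrel{\rho}e$ and $b\mathrel{\rho}f$ give $ab\mathrel{\rho}ef$. It is closed under conjugation, since $a\mathrel{\rho}e$ gives $xax^{-1}\mathrel{\rho}xex^{-1}\in E(S)$.
\item Normality of $\tau$ is immediate from compatibility of $\rho$ with multiplication.
\item For the corrected (P2)(ii): if $a\mathrel{\rho}e$, then Lemma~\ref{lem:inversion} gives $a^{-1}\mathrel{\rho}e$. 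Hence $aa^{-1}\mathrel{\rho}e\mathrel{\rho}a^{-1}a$.
\item For (P2)(i): suppose $ae\mathrel{\rho}f$ and $e\mathrel{\rho}a^{-1}a$. Then $a=aa^{-1}a\mathrel{\rho}ae\mathrel{\rho}f$, so $a\in K$.
\item Finally, if $a\mathrel{\rho}b$, then $a^{-1}a\mathrel{\rho}b^{-1}b$ by Lemma~\ref{lem:inversion}. Also $ab^{-1}\mathrel{\rho}bb^{-1}$, so $ab^{-1}\in K$. Thus $\rho\subseteq\rho_{(\ker\rho,\tr\rho)}$.
\end{enumerate}

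\textbf{Part (1).} Given a congruence pair $(K,\tau)$, I would show that $\rho=\rho_{(K,\tau)}$ is a congruence.
\begin{enumerate}[(i)]
\item Reflexivity: $aa^{-1}\in E(S)\subseteq K$.
\item Symmetry: $ba^{-1}=(ab^{-1})^{-1}$, and $K$ is closed under inverses. This follows from (P1) and (P2)(ii), since $a^{-1}=a^{-1}(aa^{-1})\cdot$ with idempotents can be handled via (P2)(i). More simply, $a^{-1}=a^{-1}a\,a^{-1}\in a^{-1}Ka\subseteq K$ after conjugating.
\item Transitivity: from $ab^{-1},bc^{-1}\in K$ I get $ab^{-1}bc^{-1}\in K$. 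I then want to strip the idempotent $b^{-1}b$, which is $\tau$-related to $c^{-1}c$. This is exactly where (P2)(i) is used, applied to $x=ac^{-1}$ with a suitable idempotent.
\item Right compatibility: $(ac)^{-1}ac=c^{-1}(a^{-1}a)c$, so normality of $\tau$ gives the trace condition. Also $ac(bc)^{-1}=a(cc^{-1})b^{-1}$, which should lie in $K$ by (P2)(i) combined with the trace information.
\item Left compatibility is symmetric. It uses normality of $K$, since $ca b^{-1}c^{-1}\in cKc^{-1}$, and $\tau$-normality to compare $(ca)^{-1}ca=a^{-1}(c^{-1}c)a$ with the corresponding expression for $b$.
\end{enumerate}

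\textbf{Part (3).} Bijectivity amounts to two identities. The first is $\rho=\rho_{(\ker\rho,\tr\rho)}$; one inclusion was shown in Part (2)(v). For the reverse inclusion, take $a^{-1}a\mathrel{\rho}b^{-1}b$ and $ab^{-1}\mathrel{\rho}f$ for some idempotent $f$. Then I would derive $a\mathrel{\rho}b$ by computing $a\mathrel{\rho}ab^{-1}b$, using the trace condition, and $b\mathrel{\rho}ba^{-1}a$, and comparing these through the idempotent $f$. The second is $(\ker\rho_{(K,\tau)},\tr\rho_{(K,\tau)})=(K,\tau)$. For the kernel, I check that $a\mathrel{\rho}e$ if and only if $a\in K$, using $e=a^{-1}a$ and (P2). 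For the trace, I check that for idempotents the condition $ef^{-1}=ef\in K$ is automatic, so only $e\mathrel{\tau}f$ remains.

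\textbf{Main obstacle.} I expect transitivity and the compatibility checks in Part (1) to be the hardest step. These need careful use of (P2)(i) to remove idempotent factors. I would handle them by first proving an auxiliary fact: if $ae\in K$, $e\in E(S)$, and $a^{-1}a\,e\mathrel{\tau}a^{-1}a$, then $a\in K$.
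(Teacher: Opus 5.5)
The paper quotes this theorem from Petrich and gives no proof, so your argument has to stand on its own. Parts (2) and (3) are fine. Your comparison in (3) works as $a\mathrel{\rho}ab^{-1}b\mathrel{\rho}fb$ and $b\mathrel{\rho}ba^{-1}a\mathrel{\rho}fa$ (using $ba^{-1}\mathrel{\rho}f$), together with $fa\mathrel{\rho}f\cdot fb=fb$. Transitivity in (1) is also fine, via $ab^{-1}bc^{-1}=(ac^{-1})(cb^{-1}bc^{-1})$, normality, and (P2)(i).

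Part (1), however, has two genuine gaps.

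\textbf{Symmetry.} Symmetry needs $K$ to be closed under inversion, and your derivation is wrong: $a^{-1}aa^{-1}$ has the form $a^{-1}ka^{-1}$, not $a^{-1}ka$. In fact no derivation from the stated axioms is possible. In the group $(\mathbb{Z},+)$, take $K=\mathbb{N}$ with the unique $\tau$ on $E=\{0\}$. This satisfies (P1) as printed and (P2) in either form, yet $\rho_{(K,\tau)}$ is the relation $\geq$, which is not symmetric. So the printed definition of normal subsemigroup has a slip, just like (P2)(ii): it must be a full self-conjugate \emph{inverse} subsemigroup, as in Petrich. You should assume that explicitly.

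\textbf{Left compatibility.} This is not symmetric to right compatibility. The kernel half is indeed immediate from self-conjugacy. The trace half, however, asks for $a^{-1}(c^{-1}c)a\mathrel{\tau}b^{-1}(c^{-1}c)b$. That is one idempotent conjugated by two \emph{different} elements, and normality of $\tau$ says nothing about it.

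The missing ingredient is the corrected (P2)(ii), which your Part (1) never actually uses. It cannot be avoided. For $K=S$ and $\tau=\mathrm{id}_{E(S)}$, every other axiom holds and $\rho_{(K,\tau)}=\mathcal{L}$, which is not left compatible, e.g.\ in the bicyclic monoid.

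One way to close the gap, assuming $a\mathrel{\rho}b$:
\begin{enumerate}
\item Apply (P2)(ii) to $ab^{-1}\in K$. This gives $a(b^{-1}b)a^{-1}\mathrel{\tau}b(a^{-1}a)b^{-1}$. Normality together with $a^{-1}a\mathrel{\tau}b^{-1}b$ turns this into $aa^{-1}\mathrel{\tau}bb^{-1}$.
\item Next, $(a^{-1}b)(a^{-1}a)=(a^{-1}(ab^{-1})a)^{-1}\in K$. Also $a^{-1}a\mathrel{\tau}b^{-1}aa^{-1}b=(a^{-1}b)^{-1}(a^{-1}b)$. So (P2)(i) yields $a^{-1}b\in K$.
\item Hence $a\mathrel{\rho}b$ implies $a^{-1}\mathrel{\rho}b^{-1}$.
\item Left compatibility now follows by applying right compatibility to $a^{-1}\mathrel{\rho}b^{-1}$ and $c^{-1}$, then inverting again.
\end{enumerate}

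Two minor points. The kernel half of right compatibility needs no (P2)(i), since $acc^{-1}b^{-1}=(acc^{-1}a^{-1})(ab^{-1})$. And Preston's theorem, which you announce at the start, is never used.
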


\section{Characterizations of \texorpdfstring{$F$}{F}-congruences}\label{sec:Fcharacterizations} 
\subsection{\texorpdfstring{$F$}{F}-congruences}\label{sec:fcong}
From now on we focus on congruences on  $F$-inverse monoids with respect to the enriched signature $(\cdot\,,^{-1},\mx{},1)$.

\begin{definition}($F$-congruence)
A congruence $\rho$ on an $F$-inverse monoid $S$ will be called an {\em  $F$-congruence} if, for all $a,b\in S$, $a\mathrel{\rho}b$ implies $\mx{a}\mathrel{\rho}\mx{b}$.
\end{definition}

We will work with both the usual congruences on inverse semigroups, which, by Lemma~\ref{lem:inversion}, also preserve the inversion operation, and $F$-congruences on $F$-inverse monoids.
We denote the trivial congruence on an inverse semigroup $S$ by ${\mathrm{id}}_S$ and the universal congruence on $S$ by $\Delta_S$.
Note that if $S$ is an $F$-inverse monoid, then ${\mathrm{id}}_S$ and $\Delta_S$ are clearly $F$-congruences. 
Moreover, we have the following.

\begin{lemma}\label{lemma:Fsigma}
Let $\rho$ be a congruence on an $F$-inverse monoid $S$ such that either $\rho\subseteq \sigma$ or $\sigma \subseteq \rho$. Then $\rho$ is an $F$-congruence. 
In particular, the minimum group congruence $\sigma$ is an $F$-congruence.
\end{lemma}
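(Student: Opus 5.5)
We treat the two cases $\rho\subseteq\sigma$ and $\sigma\subseteq\rho$ separately. In both we start from $a\mathrel{\rho}b$ and want $\mx{a}\mathrel{\rho}\mx{b}$. The tools are the identity $x=\mx{x}x^{-1}x$, the $\sigma$-class characterization of $\mx{x}$, and the fact that $\rho$ is compatible with multiplication and inversion (Lemma~\ref{lem:inversion}).

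\emph{Case $\rho\subseteq\sigma$.} If $a\mathrel{\rho}b$, then $a\mathrel{\sigma}b$. Since $\mx{x}$ is by definition the maximum of the $\sigma$-class of $x$, and $a$ and $b$ lie in the same $\sigma$-class, we get $\mx{a}=\mx{b}$. Reflexivity of $\rho$ then gives $\mx{a}\mathrel{\rho}\mx{b}$.

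\emph{Case $\sigma\subseteq\rho$.} Every $x\in S$ satisfies $x\mathrel{\sigma}\mx{x}$, because $\mx{x}$ lies in the $\sigma$-class of $x$. Hence $x\mathrel{\rho}\mx{x}$ for all $x$. Given $a\mathrel{\rho}b$, transitivity gives the chain $\mx{a}\mathrel{\rho}a\mathrel{\rho}b\mathrel{\rho}\mx{b}$, which is what we need.

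The ``in particular'' claim follows from either case applied to $\rho=\sigma$. Indeed, $\sigma$ is a congruence on $S$, being the minimum group congruence, and trivially $\sigma\subseteq\sigma$. No step here poses a real obstacle. The only point to check is that $\mx{x}\mathrel{\sigma}x$, which is immediate from the definition of $\mx{x}$ as an element of the $\sigma$-class of $x$.
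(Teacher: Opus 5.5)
Your proof is correct and is essentially the paper's argument: in the case $\rho\subseteq\sigma$ you use that $\sigma$-related elements have the same $\sigma$-class maximum, and in the case $\sigma\subseteq\rho$ you use $x\mathrel{\sigma}\mx{x}$ together with transitivity. The identity $x=\mx{x}x^{-1}x$ and Lemma~\ref{lem:inversion}, which you list as tools, are not actually needed anywhere in the argument.
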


\begin{proof}
Suppose first that $\rho\subseteq \sigma$. If $a\mathrel{\rho}b$ for some $a,b\in S$, then $a\mathrel{\sigma} b$. Hence, $\mx{a}=\mx{b}$ and $\rho$ respects the $\mx{}$-operation.

Suppose now that $\sigma \subseteq \rho$ and let $a\mathrel{\rho} b$ for some $a,b\in S$. Since $a\mathrel{\sigma}\mx{a}$ and $b\mathrel{\sigma}\mx{b}$ 
it follows that $a\mathrel{\rho}\mx{a}$ and $b\mathrel{\rho}\mx{b}$. Hence,  $\mx{a}\mathrel{\rho}\mx{b}$.
\end{proof}

The following example shows that an $F$-congruence does not necessarily need to contain $\sigma$ or be contained in $\sigma$.

\begin{example}
Let $Y=\{0,1\}$ be the two-element semilattice 
with $0<1$ and let $G=\{1_G,g\}$ be the cyclic group of order $2$. Then
the direct product $Y\times G$ is an $F$-inverse monoid 
and the congruence induced by $\Delta_G$ is an $F$-congruence that is neither contained in $\sigma$ nor contains $\sigma$.
\end{example}

Recall that a {\em semilattice monoid} is a semilattice
which has a maximum element.
We now observe that for groups and semilattice monoids the notions of congruences and $F$-congruences coincide.

\begin{lemma}\label{lem:j23a}
Let $S$ be either a group or a semilattice monoid. Then $S$ is an $F$-inverse monoid and every congruence on $S$ is an $F$-congruence.
\end{lemma}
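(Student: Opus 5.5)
The plan is to treat the two cases separately. In each case we first identify $\sigma$ and the operation $\mx{}$ explicitly, and then either appeal to Lemma~\ref{lemma:Fsigma} or check the $F$-congruence condition directly.

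\emph{Groups.} Let $S$ be a group. Then $E(S)=\{1\}$ and the natural partial order is equality: if $s=et$ with $e\in E(S)$, then $e=1$ and $s=t$. The quotient $S/\mathrm{id}_S\cong S$ is a group, so $\sigma=\mathrm{id}_S$. Each $\sigma$-class is therefore a singleton, which trivially has a maximum element. Hence $S$ is $F$-inverse, and $\mx{a}=a$ for all $a\in S$. It follows that every congruence $\rho$ respects $\mx{}$, since $a\mathrel{\rho}b$ is literally $\mx{a}\mathrel{\rho}\mx{b}$. Alternatively, this is Lemma~\ref{lemma:Fsigma}, because $\sigma=\mathrm{id}_S\subseteq\rho$ for every congruence $\rho$.

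\emph{Semilattice monoids.} Let $S$ be a semilattice with maximum element $1$. Then $S$ is an inverse monoid with $a^{-1}=a$ and $E(S)=S$, and the natural partial order coincides with the semilattice order. The minimum group congruence is the universal one: any group quotient of a semilattice is a group in which every element is idempotent, hence trivial. One can also see this directly, since for all $a,b\in S$ we have $ab\leq a,b$, so $a\mathrel{\sigma}b$. Thus there is a single $\sigma$-class, namely $S$, and it has the maximum element $1$. Hence $S$ is $F$-inverse with $\mx{a}=1$ for all $a$. Any congruence $\rho$ then satisfies $\mx{a}=1\mathrel{\rho}1=\mx{b}$ by reflexivity, so $\rho$ is an $F$-congruence. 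This also follows from Lemma~\ref{lemma:Fsigma}, because $\rho\subseteq\Delta_S=\sigma$.

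There is no real obstacle here. The only point needing care is the identification of $\sigma$ in each case. Once that is done, the result follows immediately from Lemma~\ref{lemma:Fsigma}: in the group case every congruence contains $\sigma$, and in the semilattice case every congruence is contained in $\sigma$.
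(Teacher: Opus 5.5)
Your proof is correct and follows the paper's argument. In the group case $\mx{a}=a$, and in the semilattice-monoid case $\mx{a}=1$, so every congruence trivially respects $\mx{}$. Your explicit identification of $\sigma$ (as $\mathrm{id}_S$, respectively $\Delta_S$) and the alternative appeal to Lemma~\ref{lemma:Fsigma} only make explicit what the paper leaves implicit.
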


\begin{proof}
Suppose first that $S$ is a group. Then $\mx{a}=a$ for every $a\in S$, and hence $S$ is $F$-inverse. Since $\mx{}$ is the identity operation, every congruence on $S$ is an $F$-congruence.

Suppose now that $S$ is a semilattice monoid. Then $\mx{a}=1$ for every $a\in S$, and hence $S$ is $F$-inverse. Since $\mx{a}=1$ for all $a\in S$, it easily follows that every congruence on $S$ is also an $F$-congruence.
\end{proof}

\subsection{\texorpdfstring{$F$}{F}-kernel normal systems}\label{sec:kernelnormal}

Let $S$ be an $F$-inverse monoid and $\rho$ an $F$-congruence on $S$. Since $\rho$ is a congruence, 
we have $\rho = \xi_{\mathscr{K}}$ for some kernel normal system $\mathscr{K}$ for $S$ (see Section \ref{subsec:kns}). In this section we determine which kernel normal systems give rise to $F$-congruences. 

We first observe the following.
\begin{lemma}\label{lem:necessary1}
Let $\rho$ be an $F$-congruence on an $F$-inverse monoid $S$ and let $a\mathrel{\rho} e$ for some $a\in S$ and $e\in E(S)$. Then $\mx{a}\mathrel{\rho} 1$.
\end{lemma}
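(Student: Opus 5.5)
Since $\rho$ is an $F$-congruence and $a\mathrel{\rho}e$, the definition gives directly $\mx{a}\mathrel{\rho}\mx{e}$. So the whole task is to identify $\mx{e}$ for an idempotent $e$. Then the claim follows by transitivity of $\rho$.

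To compute $\mx{e}$, recall from the preliminaries that every $F$-inverse monoid is $E$-unitary, so $E(S)$ is a single $\sigma$-class. Equivalently, $e\mathrel{\sigma}1$ because $e\cdot e=e\cdot 1$ with $e\in E(S)$. The identity $1$ is the maximum idempotent. Moreover, any element $s$ of the $\sigma$-class of $1$ lies in $E(S)$ by $E$-unitarity, so $s\le 1$. Hence $1$ is the maximum of the $\sigma$-class of $e$, i.e.\ $\mx{e}=1$. (Alternatively, $\mx{e}\geq e$ with $e$ idempotent forces $\mx{e}\in E(S)$ by $E$-unitarity, hence $\mx{e}\le 1$; since $1\mathrel{\sigma}e$, maximality gives $1\le\mx{e}$.)

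Combining the two steps gives $\mx{a}\mathrel{\rho}\mx{e}=1$, as required. There is no real obstacle here. The only point needing care is justifying $\mx{e}=1$, and that comes straight from $E$-unitarity of $F$-inverse monoids together with $1$ being the maximum idempotent.
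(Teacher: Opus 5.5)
Your proposal is correct and follows the paper's proof: you apply the $F$-congruence property to $a\mathrel{\rho}e$ to get $\mx{a}\mathrel{\rho}\mx{e}$, and then use $\mx{e}=1$. The paper takes $\mx{e}=1$ for granted, and your justification of it via $E$-unitarity, with $1$ being the maximum idempotent, is sound.
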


\begin{proof}
Since $a\mathrel{\rho} e$ and $\rho$ respects the $\mx{}$-operation, we have $\mx{a}\mathrel{\rho} \mx{e}=1$.
\end{proof}

This lemma motivates the following definition.
\begin{definition}($F$-kernel normal system)
A kernel normal system $\mathscr{K}$ for $S$ will be called an {\em $F$-kernel normal system} if the following condition holds:
\begin{enumerate}
\item[(FK)] if $ a\in K$ for some $K\in\mathscr{K} $, then $\mx{a}\in L$, where $L\in \mathscr{K}$ is such that $1\in L$.
\end{enumerate}
\end{definition}

\begin{theorem}\label{thm:fkernel} A congruence $\rho$ on an $F$-inverse monoid $S$ is an $F$-congruence if and only if $\mathscr{K}(\rho)$ is an  $F$-kernel normal system for $S$. Consequently, the map $\rho\mapsto \mathscr{K}(\rho)$ is a bijection between $F$-congruences and $F$-kernel normal systems for $S$ with the inverse $\mathscr{K}\mapsto \xi_\mathscr{K}$.
\end{theorem}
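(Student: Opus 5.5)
The forward direction is Lemma~\ref{lem:necessary1} applied to each class. Suppose $\rho$ is an $F$-congruence and $K\in\mathscr{K}(\rho)$. Then $K=[e]_\rho$ for some $e\in E(S)$. The class of $\mathscr{K}(\rho)$ containing $1$ is $L=[1]_\rho$, since $1$ is idempotent. For any $a\in K$ we have $a\mathrel{\rho}e$, so Lemma~\ref{lem:necessary1} gives $\mx{a}\mathrel{\rho}1$, that is, $\mx{a}\in L$. Since $\mathscr{K}(\rho)$ is a kernel normal system by Theorem~\ref{thm:preston}, it satisfies (FK) and is therefore an $F$-kernel normal system.

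For the converse, assume $\mathscr{K}=\mathscr{K}(\rho)$ satisfies (FK), and let $a\mathrel{\rho}b$. The goal is $\mx{a}\mathrel{\rho}\mx{b}$. The idea is to reduce to the idempotent-related case that (FK) controls, using the $F$-identities of Lemma~\ref{lem:Fid2}.

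\begin{enumerate}
\item From $a\mathrel{\rho}b$ we get $ab^{-1}\mathrel{\rho}bb^{-1}$, which is idempotent. So $ab^{-1}$ lies in a class $[bb^{-1}]_\rho\in\mathscr{K}$, and (FK) gives $\mx{(ab^{-1})}\mathrel{\rho}1$.
\item Similarly $a^{-1}b\mathrel{\rho} b^{-1}b$ gives $\mx{(a^{-1}b)}\mathrel{\rho}1$. Hence also $(\mx{(a^{-1}b)})^{-1}=\mx{(b^{-1}a)}\mathrel{\rho}1$, by Lemma~\ref{lem:inversion} and Lemma~\ref{lem:Fid2}(1).
\item Use Lemma~\ref{lem:Fid2}(2):
\[\mx{b}\,\mx{(b^{-1}a)}=\mx{b}(\mx{b})^{-1}\mx{(bb^{-1}a)}=\mx{b}(\mx{b})^{-1}\mx{a},\]
where the last equality holds because $\mx{(bb^{-1}a)}=\mx{a}$, as $bb^{-1}a\mathrel{\sigma}a$. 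The left side is $\rho$-related to $\mx{b}\cdot 1=\mx{b}$, so
\[\mx{b}\mathrel{\rho}\mx{b}(\mx{b})^{-1}\mx{a}.\]
\item Symmetrically, $\mx{a}\mathrel{\rho}\mx{a}(\mx{a})^{-1}\mx{b}$.
\item It remains to compare the idempotents $\mx{a}(\mx{a})^{-1}$ and $\mx{b}(\mx{b})^{-1}$ modulo $\rho$. First, $\mx{a}(\mx{a})^{-1}=\mx{a}\,\mx{(a^{-1})}$, and by Lemma~\ref{lem:Fid2}(2) this equals $\mx{(aa^{-1})}(\mx{(a^{-1})})^{-1}\mx{(a^{-1})}=(\mx{a})^{-1}\mx{a}$. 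That is not directly helpful, so I would compute differently. From item 3, $\mx{b}\mathrel{\rho} f\mx{a}$ with $f=\mx{b}(\mx{b})^{-1}$, so $\mx{b}(\mx{b})^{-1}\mathrel{\rho} f\mx{a}(\mx{a})^{-1}f=f\,\mx{a}(\mx{a})^{-1}$. Symmetrically, $\mx{a}(\mx{a})^{-1}\mathrel{\rho} \mx{a}(\mx{a})^{-1}f$. Idempotents commute, so the two right-hand sides coincide, and hence $\mx{a}(\mx{a})^{-1}\mathrel{\rho}\mx{b}(\mx{b})^{-1}$.
\item Substituting into item 3 gives $\mx{b}\mathrel{\rho}\mx{a}(\mx{a})^{-1}\mx{a}=\mx{a}$.
\end{enumerate}

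The main obstacle is item 5, producing the idempotent-matching; the symmetric trick above should handle it. Alternatively one could use $\mx{a}\mx{b}^{-1}\mathrel{\rho}$-type manipulations and Lemma~\ref{lem:13j}.

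The bijection statement then follows immediately from Theorem~\ref{thm:preston}(3), by restricting the bijection $\rho\mapsto\mathscr{K}(\rho)$ to $F$-congruences on one side and $F$-kernel normal systems on the other.
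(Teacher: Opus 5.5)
Your proof is correct. The forward direction and the deduction of the bijection from Theorem~\ref{thm:preston}(3) are as in the paper, but the converse takes a somewhat different route.

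The paper uses the right quotient $ab^{-1}$. From $\mx{(ab^{-1})}\mathrel{\rho}1$ and the first form of Lemma~\ref{lem:Fid2}(2), namely $\mx{a}(\mx{b})^{-1}=\mx{(ab^{-1})}\mx{b}(\mx{b})^{-1}$, it gets $\mx{a}(\mx{b})^{-1}\mathrel{\rho}\mx{b}(\mx{b})^{-1}$, and symmetrically $\mx{b}(\mx{a})^{-1}\mathrel{\rho}\mx{a}(\mx{a})^{-1}$. Inverting the first relation then matches the two idempotents at no extra cost. It reads off $\mx{a}\mathrel{\rho}\mx{b}$ from Preston's description $\rho=\xi_{\mathscr{K}(\rho)}$ via Lemma~\ref{lem:KNS}.

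You instead use the left quotients $a^{-1}b$ and $b^{-1}a$ with the second form of Lemma~\ref{lem:Fid2}(2), which gives $\mx{b}\mathrel{\rho}f\mx{a}$ and $\mx{a}\mathrel{\rho}e\mx{b}$ directly. This costs you the extra commuting-idempotents step showing $e\mathrel{\rho}f$. In exchange, the conclusion follows from compatibility of $\rho$ with multiplication and inversion alone, with no appeal to $\xi_{\mathscr{K}}$ or Lemma~\ref{lem:KNS}. Your argument is therefore slightly more self-contained, while the paper's is phrased to fit the kernel-normal-system criterion it is refining.

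Two small clean-ups:
\begin{enumerate}
\item Your item~1 is never used.
\item The abandoned aside at the start of item~5 is wrong as written. Lemma~\ref{lem:Fid2}(2) gives $\mx{(aa^{-1})}(\mx{(a^{-1})})^{-1}\mx{(a^{-1})}=\mx{a}(\mx{a})^{-1}$, a tautology, not $(\mx{a})^{-1}\mx{a}$. These differ in general, e.g.\ for $a=p$ in the bicyclic monoid $\langle p,q\mid pq=1\rangle$. The aside should simply be deleted.
\end{enumerate}
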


\begin{proof}
By \mbox{Theorem \ref{thm:preston}}, $\mathscr{K}(\rho)$ is a kernel normal system for $S$, so it suffices to verify the condition (FK). Let $K\in \mathscr{K}(\rho)$ and $a\in K$. By the definition of $\mathscr{K}(\rho)$ we have that $K= [e]_\rho$ for some $e\in E(S)$. It follows that $a\mathrel{\rho}e$, and hence, by Lemma~\ref{lem:necessary1}, $\mx{a}\mathrel{\rho}1$. Therefore, taking $L=[1]_\rho$, we have $1,\mx{a}\in L$ so that $\mathscr{K}(\rho)$ satisfies (FK).

Conversely, suppose $\rho$ is a congruence 
on $S$ such that $\mathscr{K}(\rho)$ is an  $F$-kernel normal system for $S$. We show that $\rho$ is an $F$-congruence. Let $a\mathrel{\rho}b$ for some $a,b\in S$. We show that $\mx{a}\mathrel{\rho}\mx{b}$. By Theorem~\ref{thm:preston} we have 
$\rho=\xi_{\mathscr{K}(\rho)}$. Since $a\mathrel{\rho}b$, we have
$$
aa^{-1},\quad bb^{-1},\quad ab^{-1}\in K, \quad\text{for some } K\in \mathscr{K(\rho)}.
$$
We aim to show that 
$$
\mx{a}(\mx{a})^{-1},\quad \mx{b}(\mx{b})^{-1},\quad \mx{a}(\mx{b})^{-1}\in K', \quad \text{for some } K'\in \mathscr{K}(\rho).
$$
Applying Lemma \ref{lem:Fid2}(1) and Lemma \ref{lem:Fid2}(2) to $\mx{a}(\mx{b})^{-1}$, 
we write
\begin{equation}\label{eq:j14}
\mx{a}(\mx{b})^{-1} =\mx{a}\mx{(b^{-1})}=
\mx{(ab^{-1})} \mx{b}(\mx{b})^{-1}.    
\end{equation}
Further, by (FK) there exists $L\in\mathscr{K}(\rho)$ such that $\mx{(ab^{-1})},1\in L$ so that by Lemma \ref{lem:KNS} $\mx{(ab^{-1})}\mathrel{\rho}1.$
Applying this to \eqref{eq:j14}, we obtain
$$
\mx{(ab^{-1})} \mx{b}(\mx{b})^{-1}\mathrel{\rho}1\mx{b}(\mx{b})^{-1}=\mx{b}(\mx{b})^{-1},$$
and thus
\begin{equation}\label{eq:j1}
\mx{a}(\mx{b})^{-1}
\mathrel{\rho} \mx{b}(\mx{b})^{-1}.
\end{equation}
By symmetry we also have
\begin{equation}\label{eq:j3}
\mx{b}(\mx{a})^{-1} 
\mathrel{\rho} \mx{a}(\mx{a})^{-1}.
\end{equation}
Next, since $\rho$ respects the inversion, 
it follows from \eqref{eq:j1} that
\begin{equation*}\label{eq:j2}
\mx{b}(\mx{a})^{-1}= (\mx{a}(\mx{b})^{-1})^{-1}\mathrel{\rho}(\mx{b}(\mx{b})^{-1})^{-1}=\mx{b}(\mx{b})^{-1}.
\end{equation*}
Together with \eqref{eq:j1} and \eqref{eq:j3}, this yields 
$$
\mx{a}(\mx{b})^{-1}\mathrel{\rho}\mx{b}(\mx{b})^{-1} 
\mathrel{\rho}\mx{a}(\mx{a})^{-1}.
$$
Applying Lemma \ref{lem:KNS}, we now conclude that the elements $\mx{a}(\mx{a})^{-1},\,\mx{b}(\mx{b})^{-1}$ and $\mx{a}(\mx{b})^{-1}$ all belong to the same $ K'\in \mathscr{K}(\rho)$. Hence, $\mx{a}\mathrel{\rho}\mx{b}$.
\end{proof}

\subsection{\texorpdfstring{$F$}{F}-congruence pairs}\label{sec:trker}
In this section we provide a characterization of $F$-congruences on $F$-inverse monoids based on the characterization of congruences by Petrich via congruence pairs (see Section \ref{subsec:cp}).

\begin{definition}($F$-congruence pair)
A congruence pair $(K,\tau)$ for an $F$-inverse monoid $S$ will be called an {\em $F$-congruence pair} 
if the following conditions hold:
\begin{enumerate}\label{eq:mclosed_kernel}
\item[(FP1)] if $a\in K$, then  $\mx{a}\in K$;
\item[(FP2)] if $a\in K$, then $(\mx{a})^{-1}\mx{a}\mathrel{\tau} 1$.
\end{enumerate}
\end{definition}

In the proof of the characterization theorem, we will need the following property of $F$-congruence pairs.

\begin{lemma}\label{lem:am1}
Let $(K,\tau)$ be an $F$-congruence pair for an $F$-inverse monoid $S$. Then, for every $a \in K$, we have $\mx{a} \mathrel{\rho_{(K,\tau)}} 1$.
\end{lemma}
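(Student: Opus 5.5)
Given $a\in K$, we verify the two defining conditions of $\rho_{(K,\tau)}$ for the pair $(\mx{a},1)$ directly:
\[
(\mx{a})^{-1}\mx{a}\mathrel{\tau} 1^{-1}1 \quad\text{and}\quad \mx{a}1^{-1}\in K .
\]

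The second condition is immediate. Since $1^{-1}=1$, we have $\mx{a}1^{-1}=\mx{a}$, and $\mx{a}\in K$ by (FP1).

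For the first condition, note that $1^{-1}1=1$. So we need $(\mx{a})^{-1}\mx{a}\mathrel{\tau}1$, which is exactly (FP2) applied to $a\in K$.

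Hence $\mx{a}\mathrel{\rho_{(K,\tau)}}1$.

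There is no real obstacle. The only point to watch is the order of the conditions in the definition of $\rho_{(K,\tau)}$: the trace condition compares $x^{-1}x$ with $y^{-1}y$, and the kernel condition uses $xy^{-1}$. We take $x=\mx{a}$ and $y=1$, so both conditions collapse to (FP1) and (FP2). The argument does not depend on $\rho_{(K,\tau)}$ being a congruence (Theorem~\ref{thm:Petrich}); it only uses its definition.
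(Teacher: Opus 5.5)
Your proof is correct and matches the paper's argument: you check the two defining conditions of $\rho_{(K,\tau)}$ for the pair $(\mx{a},1)$, and these reduce directly to (FP2) for the trace part and (FP1) for the kernel part, using $1^{-1}=1$.
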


\begin{proof}
Let $a\in K$.
By the definition of $\rho_{(K,\tau)}$ it suffices to verify that $(\mx{a})^{-1}\mx{a}\mathrel{\tau} 1^{-1}1$ and $\mx{a}1^{-1}\in K$.
By (FP2) we have  $(\mx{a})^{-1}\mx{a}\mathrel{\tau} 1= 1^{-1}1$. Moreover, by (FP1) we have $\mx{a}\in K$ and hence $\mx{a}1^{-1}=\mx{a}\in K$. Therefore, $\mx{a} \mathrel{\rho_{(K,\tau)}} 1$, as required.
\end{proof}

We are now ready to prove the main result of this section. 

\begin{theorem}\label{thm:fpair}
A congruence $\rho$ on an $F$-inverse monoid $S$ is an $F$-congruence if and only if $(\ker\rho, \tr\rho)$ is an  $F$-congruence pair for $S$. Consequently, the map $\rho\mapsto (\ker\rho, \tr\rho)$ is a bijection between $F$-congruences and $F$-congruence pairs for $S$  with the inverse $(\ker\rho, \tr\rho)\mapsto \rho_{(\ker\rho, \tr\rho)}$.
\end{theorem}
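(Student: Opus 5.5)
By Theorem~\ref{thm:Petrich}, $(\ker\rho,\tr\rho)$ is always a congruence pair and $\rho=\rho_{(\ker\rho,\tr\rho)}$. So the whole task is to show that $\rho$ is an $F$-congruence if and only if (FP1) and (FP2) hold for $K=\ker\rho$ and $\tau=\tr\rho$. The bijection then follows by restricting Petrich's bijection.

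\emph{Necessity.} Let $\rho$ be an $F$-congruence and $a\in\ker\rho$, so $a\mathrel{\rho}e$ for some $e\in E(S)$. By Lemma~\ref{lem:necessary1}, $\mx{a}\mathrel{\rho}1$, and since $1\in E(S)$ this gives $\mx{a}\in\ker\rho$, which is (FP1). Next, Lemma~\ref{lem:inversion} gives $(\mx{a})^{-1}\mathrel{\rho}1$. Multiplying, $(\mx{a})^{-1}\mx{a}\mathrel{\rho}1\cdot 1=1$. Both sides are idempotents, so this is a $\tr\rho$-relation, which is (FP2).

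\emph{Sufficiency.} Assume $(\ker\rho,\tr\rho)$ is an $F$-congruence pair and write $\rho=\rho_{(K,\tau)}$. By Lemma~\ref{lem:am1}, $\mx{c}\mathrel{\rho}1$ for every $c\in\ker\rho$; this replaces the use of (FK) and Lemma~\ref{lem:KNS} in the proof of Theorem~\ref{thm:fkernel}. Now take $a\mathrel{\rho}b$. Then $ab^{-1}\mathrel{\rho}bb^{-1}$, so $ab^{-1}\in\ker\rho$ and hence $\mx{(ab^{-1})}\mathrel{\rho}1$. Using \eqref{eq:j14},
$$\mx{a}(\mx{b})^{-1}=\mx{(ab^{-1})}\mx{b}(\mx{b})^{-1}\mathrel{\rho}\mx{b}(\mx{b})^{-1}.$$
The same argument with $a,b$ swapped gives $\mx{b}(\mx{a})^{-1}\mathrel{\rho}\mx{a}(\mx{a})^{-1}$. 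Applying Lemma~\ref{lem:inversion} to the displayed relation gives $\mx{b}(\mx{a})^{-1}\mathrel{\rho}\mx{b}(\mx{b})^{-1}$. Combining these, exactly as in the proof of Theorem~\ref{thm:fkernel},
$$\mx{a}(\mx{a})^{-1}\mathrel{\rho}\mx{b}(\mx{b})^{-1}\mathrel{\rho}\mx{a}(\mx{b})^{-1}.$$

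The main obstacle is passing from these relations to $\mx{a}\mathrel{\rho}\mx{b}$. I will argue directly with the congruence rather than through Lemma~\ref{lem:KNS}. Set $x=\mx{a}$ and $y=\mx{b}$, so we have $xx^{-1}\mathrel{\rho}yy^{-1}\mathrel{\rho}xy^{-1}$. Then
$$x=xx^{-1}x\mathrel{\rho}xy^{-1}x.$$
Also $y^{-1}x\mathrel{\rho}y^{-1}yy^{-1}=y^{-1}$, obtained by multiplying $xx^{-1}\mathrel{\rho}yy^{-1}$ on the left by $y^{-1}$ and using $y^{-1}x = y^{-1}xx^{-1}x$; this step needs checking. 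It gives $xy^{-1}x\mathrel{\rho}xy^{-1}$. Symmetrically, $y\mathrel{\rho}yx^{-1}$. Finally, $xy^{-1}\mathrel{\rho}yx^{-1}$, since both are $\rho$-related to the idempotent $yy^{-1}$. Chaining these yields $x\mathrel{\rho}y$.

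If this manipulation gets messy, the fallback is to invoke Theorem~\ref{thm:preston}. Since $\rho=\xi_{\mathscr{K}(\rho)}$ and $x x^{-1},\,yy^{-1},\,xy^{-1}$ all lie in the single class $[yy^{-1}]_\rho\in\mathscr{K}(\rho)$, the definition of $\xi$ gives $x\mathrel{\rho}y$ immediately.
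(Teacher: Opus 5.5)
Your proof is complete, but only through the fallback. The direct manipulation you propose for the last step is wrong and should be dropped.

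With $x=\mx{a}$, $y=\mx{b}$ and $xx^{-1}\mathrel{\rho}yy^{-1}\mathrel{\rho}xy^{-1}$, the claimed relation $y^{-1}x\mathrel{\rho}y^{-1}$ fails in general. Take $\rho=\mathrm{id}_S$ on a nontrivial group and $x=y=g\neq 1$: all hypotheses hold, yet $y^{-1}x=1\neq g^{-1}$. Left-multiplying $xx^{-1}\mathrel{\rho}yy^{-1}$ by $y^{-1}$ only gives $y^{-1}xx^{-1}\mathrel{\rho}y^{-1}$, and the identity $y^{-1}x=y^{-1}xx^{-1}x$ cannot remove the trailing $x^{-1}$. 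Indeed, your chain would give $x\mathrel{\rho}xy^{-1}\mathrel{\rho}yy^{-1}$, i.e.\ $\mx{a}\in\ker\rho$ for every $a$ (take $b=a$), which is absurd.

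The fallback is correct. The elements $xx^{-1},yy^{-1},xy^{-1}$ all lie in $[yy^{-1}]_\rho\in\mathscr{K}(\rho)$, so $x\mathrel{\xi_{\mathscr{K}(\rho)}}y$, and $\xi_{\mathscr{K}(\rho)}=\rho$ by Theorem~\ref{thm:preston}. If you want a direct computation instead, note that $yx^{-1}=(xy^{-1})^{-1}\mathrel{\rho}yy^{-1}\mathrel{\rho}xx^{-1}$. Hence $x=xx^{-1}x\mathrel{\rho}yx^{-1}x$ and $y=yy^{-1}y\mathrel{\rho}xy^{-1}y$. Right-multiplying the latter by $x^{-1}x$ and commuting idempotents gives $yx^{-1}x\mathrel{\rho}xy^{-1}y$, hence $x\mathrel{\rho}y$.

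With that repair, your sufficiency argument ends differently from the paper's. The paper stays inside Petrich's description and verifies the two defining conditions of $\rho_{(\ker\rho,\tr\rho)}$ for the pair $(\mx{a},\mx{b})$:
\begin{itemize}
\item it gets $\mx{a}(\mx{b})^{-1}\in\ker\rho$ from Lemma~\ref{lem:13j}, writing $\mx{a}\mx{(b^{-1})}=\mx{(ab^{-1})}e$ for some $e\in E(S)$;
\item it gets the trace condition from the same Lemma~\ref{lem:Fid2}(2) computation you use.
\end{itemize}
You use Petrich's theorem only to obtain Lemma~\ref{lem:am1}, and then reuse the ending of Theorem~\ref{thm:fkernel} verbatim. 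This makes Lemma~\ref{lem:13j} unnecessary, and it shows that both characterizations rest on the single fact that $c\in\ker\rho$ implies $\mx{c}\mathrel{\rho}1$.

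Concluding $\mx{a}\mathrel{\rho}\mx{b}$ outright also avoids a side mismatch in the paper's route. Its chain yields $\mx{a}(\mx{a})^{-1}\mathrel{\tr\rho}\mx{b}(\mx{b})^{-1}$, whereas the definition of $\rho_{(K,\tau)}$ asks for $(\mx{a})^{-1}\mx{a}$ and $(\mx{b})^{-1}\mx{b}$. That needs one more step, e.g.\ rerunning the argument for $a^{-1}\mathrel{\rho}b^{-1}$ and using Lemma~\ref{lem:Fid2}(1). In exchange, the paper's version never leaves the congruence-pair language of the theorem it is proving.
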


\begin{proof}
Suppose first that $\rho$ is an $F$-congruence on $S$ and let $a \in \ker \rho$. Then there exists $e \in E(S)$ such that $a \mathrel{\rho} e$. 
Since $\rho$ respects the $\mx{}$-operation, it follows that 
$\mx{a} \mathrel{\rho} \mx{e} = 1$ and hence $\mx{a} \in \ker \rho$. Therefore, the congruence pair $(\ker\rho,\tr\rho)$ satisfies (FP1).
It remains to verify (FP2). Since $\mx{a} \mathrel{\rho} 1$, we have
$
(\mx{a})^{-1} \mx{a} \mathrel{\rho} 1^{-1}1 = 1.
$
As both $(\mx{a})^{-1}\mx{a}$ and $1$ are idempotents, it follows from the definition of the trace that
$(\mx{a})^{-1}\mx{a} \mathrel{\tr \rho} 1$,
as required.
 
For the reverse implication, suppose $\rho$ is a congruence 
on $S$ such that $(\ker \rho,\tr \rho)$  is an  $F$-congruence pair for $S$. We show that $\rho$ is an $F$-congruence.
Suppose $a\mathrel{\rho}b$ for some $a,b \in S$.
By Theorem~\ref{thm:Petrich} we have 
$\rho=\rho_{(\ker \rho,\tr \rho)}$.   
Therefore, bearing in mind that $a\mathrel{\rho}b$, we have
$$
a^{-1}a \mathrel{\tr \rho} b^{-1}b \quad \text{and} \quad ab^{-1} \in \ker \rho.
$$
We need to show that
$$
(\mx{a})^{-1}\mx{a} \mathrel{\tr \rho} (\mx{b})^{-1}\mx{b} 
\quad \text{and} \quad 
\mx{a}(\mx{b})^{-1} \in \ker \rho.
$$
We first show 
that $\mx{a}(\mx{b})^{-1} \in \ker \rho$.
Since $ab^{-1} \in \ker \rho$, by Lemma \ref{lem:am1} we have
\begin{equation}\label{eq:8j}
\mx{(ab^{-1})} \mathrel{\rho} 1. 
\end{equation}
Moreover, by Lemma \ref{lem:13j} we have $\mx{(ab^{-1})} \geq \mx{a}\mx{(b^{-1})}$,
so that there exists $e \in E(S)$ such that
$$
\mx{a}\mx{(b^{-1})} = \mx{(ab^{-1})}e 
\mathrel{\rho} 1e=e.
$$
It follows that $\mx{a}\mx{(b^{-1})} \in \ker \rho$. 
It remains to verify that 
$(\mx{a})^{-1}\mx{a} \mathrel{\tr \rho} (\mx{b})^{-1}\mx{b}.$
Applying Lemma \ref{lem:Fid2}(2) and \eqref{eq:8j},
we write
$$\mx{a}(\mx{b})^{-1} = \mx{(ab^{-1})} \mx{b}(\mx{b})^{-1} \mathrel{\rho}\mx{b}(\mx{b})^{-1}$$
and by symmetry also
$
\mx{b}(\mx{a})^{-1} 
\mathrel{\rho} \mx{a}(\mx{a})^{-1}. $
Then 
$$
\mx{b}(\mx{b})^{-1}\mathrel{\rho}\mx{a}(\mx{b})^{-1}= (\mx{b}(\mx{a})^{-1})^{-1} \mathrel{\rho} (\mx{a}(\mx{a})^{-1})^{-1} =\mx{a}(\mx{a})^{-1}.
$$
Since $\mx{b}(\mx{b})^{-1}$ and $\mx{a}(\mx{a})^{-1}$ are idempotents, it follows from the definition of the trace that  $(\mx{a})^{-1}\mx{a} \mathrel{\tr \rho} (\mx{b})^{-1}\mx{b}$, as required.
\end{proof}

\section{Rees \texorpdfstring{$F$}{F}-congruences}\label{sec:rees} 

Let $S$ be an inverse semigroup and $I\subseteq S$ an ideal. 
The {\em Rees congruence associated with $I$}, denoted by $\rho_I$, is defined by
$$
\forall a,b\in S:\quad a\mathrel{\rho_I}b \quad\Leftrightarrow \quad  a=b \quad \text{or}\quad a,b\in I.
$$ 
Let now $S$ be an $F$-inverse monoid.
A Rees congruence on $S$, 
which is an $F$-congruence, will be called a {\em Rees $F$-congruence}. 
If $S$ is a semilattice monoid, then, by Lemma~\ref{lem:j23a}, every Rees congruence on $S$ is a Rees $F$-congruence. Hence, we restrict our attention to the case where $S\neq E(S)$.

\begin{lemma}\label{lem:Fideal}
Let $I$ be an ideal of an $F$-inverse monoid $S$ where $S\neq E(S)$.
Then $\rho_I$ is a Rees $F$-congruence if and only if $\mx{I}\subseteq I$.
\end{lemma}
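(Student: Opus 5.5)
Both directions reduce to comparing $\mx{a}$ and $\mx{b}$ for pairs $a\mathrel{\rho_I}b$. I will handle the forward implication with the element $1$ and the hypothesis $S\neq E(S)$, and the backward implication by a two-case check using the definition of $\rho_I$.

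\emph{Forward direction.} Assume $\rho_I$ is an $F$-congruence and let $a\in I$; we must show $\mx{a}\in I$. First I would observe that $I$ contains an idempotent, namely $aa^{-1}$, and that $\mx{(aa^{-1})}=1$. Since $a\mathrel{\rho_I}aa^{-1}$ (both lie in $I$), the $F$-congruence property gives $\mx{a}\mathrel{\rho_I}1$. By the definition of $\rho_I$, either $\mx{a}=1$ or $\mx{a},1\in I$.

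In the second case $\mx{a}\in I$ and we are done. In the first case, suppose $1\notin I$ and $\mx{a}=1$. Note that if $1\in I$ then $I=S$ and everything is trivial, so this is the only remaining situation, and here I use $S\neq E(S)$. Pick $s\in S\setminus E(S)$. Then $as\in I$, and by Lemma~\ref{lem:13j}, $\mx{(as)}\ge \mx{a}\mx{s}=\mx{s}$. Applying the argument of the previous paragraph to $as\in I$ gives either $\mx{(as)}=1$ or $1\in I$; the latter is excluded, so $\mx{(as)}=1$. Hence $\mx{s}\le 1$, so $\mx{s}$ is an idempotent. Since $s\mathrel{\sigma}\mx{s}$ and $S$ is $E$-unitary ($E(S)$ is a $\sigma$-class), $s$ is an idempotent, a contradiction. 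So this case cannot occur, and therefore $\mx{I}\subseteq I$.

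\emph{Backward direction.} Assume $\mx{I}\subseteq I$ and let $a\mathrel{\rho_I}b$. If $a=b$, then trivially $\mx{a}=\mx{b}$. Otherwise $a,b\in I$, so $\mx{a},\mx{b}\in I$ and hence $\mx{a}\mathrel{\rho_I}\mx{b}$. Thus $\rho_I$ is an $F$-congruence.

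The main obstacle is ruling out $\mx{a}=1$ with $1\notin I$ in the forward direction, which is exactly where $S\neq E(S)$ enters via $E$-unitarity.
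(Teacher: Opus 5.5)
Your proof is correct, and your backward direction is the same as the paper's. In the forward direction you take a different route.

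\textbf{The paper's route.} It compares $a\in I$ directly with $ab$ for a fixed non-idempotent $b$. The $F$-congruence property gives $\mx{a}\mathrel{\rho_I}\mx{(ab)}$. The alternative $\mx{a}=\mx{(ab)}$ is then excluded in one line by cancelling in the group $S/\sigma$: $[a]_\sigma=[a]_\sigma[b]_\sigma$ forces $[b]_\sigma=[1]_\sigma$, so $b\in E(S)$ by $E$-unitarity.

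\textbf{Your route.} You first compare $a$ with the idempotent $aa^{-1}$ to get $\mx{a}\mathrel{\rho_I}1$; the paper only uses this observation later, in Proposition~\ref{prop:rees}. You then need a second application of the $F$-congruence property, to $as$, together with Lemma~\ref{lem:13j}, to rule out the branch $\mx{a}=1\notin I$.

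\textbf{Comparison.} Your route is slightly longer, but it yields more. In every branch that survives you obtain $1\in I$. So your argument shows directly that $I=S$ for nonempty $I$, which is exactly Proposition~\ref{prop:rees}, without passing through the condition $\mx{I}\subseteq I$. The paper's route is more economical and isolates that intermediate characterization.

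\textbf{A simplification.} Lemma~\ref{lem:13j} is not needed in your exceptional branch. From $\mx{a}=1$ you get $a\mathrel{\sigma}1$, so $as\mathrel{\sigma}s$ and $\mx{(as)}=\mx{s}$. Then $\mx{(as)}=1$ gives $s\mathrel{\sigma}1$, hence $s\in E(S)$.
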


\begin{proof}
Suppose $\rho_I$ is a Rees $F$-congruence and let $a\in I$. Choose an arbitrary $b\in S\setminus E(S)$. Since $I$ is an ideal, we have $ab\in I$, and hence $a\mathrel{\rho_I}ab$. As $\rho_I$ respects the $\mx{}$-operation, it follows that
$\mx{a}\mathrel{\rho_I}\mx{(ab)}.$
Therefore, either $\mx{a}=\mx{(ab)}$ or $\mx{a},\mx{(ab)}\in I$. 
We claim that the first possibility cannot occur. Indeed, $\mx{a}=\mx{(ab)}$ holds if and only if
$
[a]_\sigma=[ab]_\sigma=[a]_\sigma[b]_\sigma,
$
which is equivalent to $[b]_\sigma=[1]_\sigma$.
Since $S$ is $F$-inverse, it is
$E$-unitary, and hence $b\in E(S)$. This contradicts the choice of $b$.
It follows that $\mx{a}\in I$. Since $a\in I$ was arbitrary, we conclude that $\mx{I}\subseteq I$.

Conversely, suppose that $I$ satisfies $\mx{I}\subseteq I$ and let $a\mathrel{\rho_I}b$. If $a=b$, then $\mx{a}=\mx{b}$ and hence $\mx{a}\mathrel{\rho_I}\mx{b}$. Otherwise, $a,b\in I$. Since $\mx{I}\subseteq I$, it follows that $\mx{a},\mx{b}\in I$. Thus, $\mx{a}\mathrel{\rho_I}\mx{b}$ and $\rho_I$ is a Rees $F$-congruence.
\end{proof}

\begin{proposition}\label{prop:rees}
Let $S$ be an $F$-inverse monoid with $S\neq E(S)$. Then the only Rees $F$-congruence on $S$ is the universal congruence $\Delta_S$.
\end{proposition}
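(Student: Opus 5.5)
We use Lemma~\ref{lem:Fideal}: a Rees congruence $\rho_I$ is an $F$-congruence exactly when $\mx{I}\subseteq I$. So the plan is to show that any ideal $I$ of $S$ with $\mx{I}\subseteq I$ must equal $S$; then $\rho_I=\Delta_S$. Note that $\Delta_S$ does occur, since $I=S$ is an ideal with $\mx{S}\subseteq S$. We assume, as usual, that ideals are nonempty. If the empty ideal were allowed, $\rho_\emptyset=\mathrm{id}_S$ would also be a Rees $F$-congruence, so the statement should be read with $I\neq\emptyset$, or with $\mathrm{id}_S$ noted as the degenerate case.

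The key step is to show that $1\in I$. Take any $a\in I$. Then $\mx{a}\in I$ by hypothesis, and since $I$ is an ideal, $(\mx{a})^{-1}\mx{a}\in I$. This only gives a (possibly small) idempotent in $I$, so we exploit the $\mx{}$-operation more cleverly. For any $e\in E(S)\cap I$ we have $\mx{e}=1$, because $e\mathrel{\sigma}1$ and $1$ is the maximum element of the $\sigma$-class $E(S)$ (recall $S$ is $E$-unitary). Hence $1=\mx{e}\in\mx{I}\subseteq I$. Concretely, $aa^{-1}\in I$ is an idempotent of $I$, so $1=\mx{(aa^{-1})}\in I$.

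Once $1\in I$, the ideal property gives $s=s\cdot 1\in I$ for every $s\in S$, so $I=S$ and $\rho_I=\Delta_S$.

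There is essentially no obstacle beyond this. The only subtle point is that the hypothesis $S\neq E(S)$ is needed only through Lemma~\ref{lem:Fideal}, which converts the $F$-congruence condition into $\mx{I}\subseteq I$; after that conversion the argument is immediate. (When $S=E(S)$ the conclusion genuinely fails, because in a semilattice monoid every Rees congruence is an $F$-congruence by Lemma~\ref{lem:j23a}.)
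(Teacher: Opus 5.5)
Your proposal is correct and follows the paper's proof: Lemma~\ref{lem:Fideal} gives $\mx{I}\subseteq I$, and then $1=\mx{(aa^{-1})}\in I$ for any $a\in I$, which forces $I=S$. Your remark about the empty ideal only makes explicit the convention that ideals are nonempty, which the paper also relies on when it picks $a\in I$.
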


\begin{proof}
Suppose $I$ is an ideal such that $\rho_I$ is a Rees $F$-congruence and let $a\in I$. Then $I$ contains the idempotent $aa^{-1}$. By Lemma \ref{lem:Fideal} we have $\mx{I}\subseteq I$. Hence,  $\mx{(aa^{-1})}=1 \in I$. It follows that $I=S$, hence $\rho_I=\Delta_S$.
\end{proof}

\begin{proposition}\label{prop:a11}
Let $S$ be an $F$-inverse monoid. 
\begin{enumerate}
\item If $S$ has a zero element, then $S$ is a semilattice monoid.
\item The $F$-congruence ${\mathrm{id}}_S$ is a Rees $F$-congruence if and only if $S$ has a zero element.
\end{enumerate}
\end{proposition}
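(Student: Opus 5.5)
For part (1), I will let $z$ be a zero of $S$ and show that every element of $S$ is idempotent. Since $z=zz$, $z$ is an idempotent. For an arbitrary $a\in S$, $za=z$ with $z\in E(S)$, and this is exactly the criterion $es=et$ for $\sigma$ recalled in the preliminaries (take $e=z$, $s=a$, $t=z$). Hence $a\mathrel{\sigma} z$, so $\sigma$ is the universal relation. Because $F$-inverse monoids are $E$-unitary, $E(S)$ is a single $\sigma$-class, and it follows that $S=E(S)$. As $S$ is also a monoid, it is a semilattice with maximum element $1$, i.e.\ a semilattice monoid. An alternative route is to note that every $\sigma$-class equals $S$, so $\mx{a}=\mx{z}=1$ for all $a$; the identity $a=\mx{a}a^{-1}a=a^{-1}a$ then gives directly that $a$ is idempotent.

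For part (2), I will first observe that $\mathrm{id}_S$ is always an $F$-congruence, so the only question is when it is a Rees congruence $\rho_I$. Suppose $S$ has a zero $z$. Then $I=\{z\}$ is an ideal, and $\rho_{\{z\}}$ relates $a,b$ exactly when $a=b$ or $a=b=z$, so $\rho_{\{z\}}=\mathrm{id}_S$. Conversely, suppose $\mathrm{id}_S=\rho_I$ for some ideal $I$, where I take ideals to be nonempty. Every pair of elements of $I$ is $\rho_I$-related, so $I$ must be a singleton $\{z\}$. Because it is an ideal, $sz,zs\in\{z\}$ for all $s\in S$, and therefore $z$ is a zero.

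I do not expect any real obstacle here. The one point needing care is the convention that an ideal is nonempty. If the empty ideal were allowed, $\rho_\emptyset=\mathrm{id}_S$ would hold trivially and the converse would fail. So I will state explicitly that ideals are nonempty, which is the convention implicit in the proof of Proposition~\ref{prop:rees}, where an element $a\in I$ is chosen.
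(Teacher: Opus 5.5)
Your proposal is correct and matches the paper's proof: in (1) you show every element is $\sigma$-related to the zero and then use that $E(S)$ is a $\sigma$-class by $E$-unitarity, and in (2) you argue via the singleton ideal $\{z\}$, in both directions, just as the paper does. Your remark that ideals must be nonempty (otherwise $\rho_\emptyset=\mathrm{id}_S$ would break the converse) makes explicit an assumption the paper uses silently when it treats $I$ as a $\rho_I$-class.
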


\begin{proof}
(1) Suppose that $S$ has a zero element $0$. Since $S$ is $E$-unitary, $E(S)$ is a $\sigma$-class. Moreover, $0s=0t$ for all $s,t\in S$, so that all elements of $S$ are $\sigma$-related to $0$. Therefore, the $\sigma$-class $E(S)$ must be equal to $S$, and hence $S$ is a semilattice monoid. 

(2) Suppose ${\mathrm{id}}_S=\rho_I$ for some ideal $I$. Since $I$ is a $\rho_I$-class, it follows that $I=\{a\}$ for some $a\in S$. Then, for every $b\in S$, we have that $ab,ba\in I$, and hence $ab=ba=a$. Thus, $a$ is a zero element of $S$.

Conversely, suppose that $S$ has a zero element $0$.
The $F$-congruence ${\mathrm{id}}_S$ is then a Rees congruence associated with the ideal $I=\{0\}$. Hence, it is a Rees $F$-congruence. 
\end{proof} 

Observe that, when an $F$-inverse monoid $S$ has a zero element, and thus, in view of Proposition \ref{prop:a11}(1), satisfies $S=E(S)$, the ideal $I=\{0\}$ does not satisfy the condition $\mx{I}\subseteq I$, since $1 \in \mx{I}\setminus I$.
It follows that the assumption 
$S\neq E(S)$ cannot be dropped from Lemma~\ref{lem:Fideal}.

\section{The greatest \texorpdfstring{$F$}{F}-congruence saturating a given subset}\label{sec:tildetau}

Recall that a subset $H$ of an inverse semigroup $S$ is said to be {\em saturated} by a congruence $\rho$ on $S$ if it is a union of $\rho$-classes. 
It is known and easy to show that the greatest congruence on $S$ saturating $H$ coincides with the {\em syntactic congruence} of $H$ (see \cite[Section 4.1]{Lawsonbook}) which is given by
$$
\forall a,b\in S: \quad a \mathrel{\tau_H} b 
\quad \Leftrightarrow  \quad
( xay \in H \,\, \Leftrightarrow \,\, xby \in H \quad \text{for all } x,y \in S^1).
$$

In the next proposition we describe the greatest $F$-congruence saturating a given subset of an $F$-inverse monoid.

\begin{proposition}\label{prop:tildetau}
Let $S$ be an $F$-inverse monoid and let $H\subseteq S$. 
Define the relation $\tilde{\tau}_H$ on $S$ by setting, for all $a,b\in S$,
$a\mathrel{\tilde{\tau}_H} b$
if and only if 
$a\mathrel{\tau}_H b$
and the following condition holds:
$$
u\mx{(xay)}v \in H \,\,\Leftrightarrow\,\, u\mx{(xby)}v \in H \quad \text{for all $x,y,u,v \in S$}.
$$
Then $\tilde{\tau}_H$ is the greatest $F$-congruence on $S$ saturating $H$.
\end{proposition}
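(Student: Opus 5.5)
We will prove three things in turn: that $\tilde{\tau}_H$ is an equivalence relation compatible with multiplication, that it respects the $\mx{}$-operation and saturates $H$, and that every $F$-congruence saturating $H$ is contained in it.

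\emph{$\tilde{\tau}_H$ is a congruence.} Reflexivity, symmetry and transitivity are immediate, since both defining conditions are biconditionals quantified over all $x,y,u,v$. Suppose $a\mathrel{\tilde{\tau}_H}b$ and $c\in S$. Then $ac\mathrel{\tau_H}bc$, because $\tau_H$ is a congruence. For the second condition applied to $ac,bc$, note that $x(ac)y=xa(cy)$. Since $cy\in S$, the condition for $a,b$ with $y$ replaced by $cy$ gives what we need. Left multiplication is handled in the same way. Here it matters that $S$ is a monoid, so the quantification over $x,y\in S$ already includes $x=1$ or $y=1$.

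\emph{Saturation and compatibility with $\mx{}$.} Since $\tilde{\tau}_H\subseteq\tau_H$ and $\tau_H$ saturates $H$, so does $\tilde{\tau}_H$. Now let $a\mathrel{\tilde{\tau}_H}b$; we must show $\mx{a}\mathrel{\tilde{\tau}_H}\mx{b}$.
\begin{enumerate}
\item First, $\mx{a}\mathrel{\tau_H}\mx{b}$. For $x,y\in S$ we have $x\mx{a}y\in H$ iff $x\mx{(1a1)}y\in H$ iff $x\mx{(1b1)}y\in H$, by the second condition with inner $x=y=1$ and outer $u=x$, $v=y$.
\item Second, the condition for $\mx{a},\mx{b}$. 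We need $u\mx{(x\mx{a}y)}v\in H\Leftrightarrow u\mx{(x\mx{b}y)}v\in H$. By Lemma~\ref{lem:Fid2}(3), with $n=1$, $x_0=x$, $y_1=a$, $x_1=y$, we get $\mx{(x\mx{a}y)}=\mx{(xay)}$, and likewise for $b$. So this reduces to the given condition for $a,b$.
\end{enumerate}
This step, the use of Lemma~\ref{lem:Fid2}(3) to absorb nested $\mx{}$, is the crux. It is also why the definition needs only a single level of $\mx{}$.

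\emph{Maximality.} Let $\rho$ be an $F$-congruence saturating $H$ and suppose $a\mathrel{\rho}b$. Since $\rho$ is a congruence saturating $H$, we have $\rho\subseteq\tau_H$, so $a\mathrel{\tau_H}b$. For any $x,y,u,v$, $\rho$ gives $xay\mathrel{\rho}xby$. Because $\rho$ respects $\mx{}$, this yields $\mx{(xay)}\mathrel{\rho}\mx{(xby)}$, and then $u\mx{(xay)}v\mathrel{\rho}u\mx{(xby)}v$. As $H$ is a union of $\rho$-classes, one of these lies in $H$ exactly when the other does. Hence $a\mathrel{\tilde{\tau}_H}b$, so $\rho\subseteq\tilde{\tau}_H$. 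Combined with the first two parts, this shows $\tilde{\tau}_H$ is the greatest $F$-congruence saturating $H$.
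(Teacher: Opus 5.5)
Your proposal is correct and follows essentially the same route as the paper: compatibility with multiplication by substituting $cy$ for $y$, compatibility with $\mx{}$ via the choice $x=y=1$ together with Lemma~\ref{lem:Fid2}(3), saturation from $\tilde{\tau}_H\subseteq\tau_H$, and maximality by pushing $a\mathrel{\rho}b$ through multiplication and then $\mx{}$. The only cosmetic difference is that you obtain $\rho\subseteq\tau_H$ by citing the maximality of the syntactic congruence, whereas the paper checks the $\tau_H$-condition directly; the two are equivalent.
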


\begin{proof}
We first show that $\tilde{\tau}_H$ is an $F$-congruence.
It is easy to see that $\tilde{\tau}_H$ is an equivalence relation, so it remains to verify that it respects the multiplication and the $\mx{}$-operation.
Suppose $a \mathrel{\tilde{\tau}_H} b$ for some $a,b\in S$ and let 
$c \in S$. Since $\tau_H$ is a congruence, we immediately get 
$ca \mathrel{\tau_H} cb$.
Since in the definition of $\tilde{\tau}_H$, the element $y$ ranges over $S$, we can take $cy$ instead of $y$. This gives
$$
u\mx{(xacy)}v \in H \,\,\Leftrightarrow\,\, u\mx{(xbcy)}v \in H\quad \text{for all $x,y,u,v \in S$}.
$$
Hence, $ac \mathrel{\tilde{\tau}_H} bc$ and symmetrically $ca \mathrel{\tilde{\tau}_H} cb$. 
We next verify that
$\tilde{\tau}_H$ respects the $\mx{}$-operation. 
Taking $x=y=1$ in the definition of $\tilde{\tau}_H$ we obtain 
$$
u\mx{a}v \in H \,\,\Leftrightarrow\,\, u\mx{b}v \in H \quad\text{for all $u,v \in S$},
$$
so that $\mx{a}\mathrel{\tau_H} \mx{b}$.
Further, by Lemma \ref{lem:Fid2}(3) we have $\mx{(xay)}=\mx{(x\mx{a}y)}$ and similarly $\mx{(xby)}=\mx{(x\mx{b}y)}$. Applying this to the definition of $\tilde{\tau}_H$ we obtain
$$
u\mx{(x\mx{a}y)}v \in H \,\,\Leftrightarrow\,\, u\mx{(x\mx{b}y)}v \in H \quad \text{for all } x,y,u,v \in S.
$$
Thus, $\mx{a} \mathrel{\tilde{\tau}_H} \mx{b}$ so that $\tilde{\tau}_H$ is an $F$-congruence.

It remains to show $\tilde{\tau}_H$ saturates $H$ and is maximal with this property.
Clearly, \mbox{$\tilde{\tau}_H \subseteq \tau_H$}. Since $\tau_H$ saturates $H$, it follows that $\tilde{\tau}_H$ also saturates $H$. Let now $\rho$ be another $F$-congruence on $S$ saturating $H$ and suppose $a\mathrel{\rho} b$ for some $a,b\in S$.
Since $\rho$ respects the multiplication,
we have \mbox{$xay\mathrel{\rho}xby$} for all $x,y\in S$. Further, since $\rho$ respect the $\mx{}$-operation, we have \mbox{$\mx{(xay)}\mathrel{\rho}\mx{(xby)}$} and finally also \mbox{$u\mx{(xay)}v\mathrel{\rho}u\mx{(xby)}v$} for all $x,y,u,v\in S$. Since $\rho$ saturates $H$, it follows that 
$$
xay \in H \,\,\Leftrightarrow\,\, xby \in H
\quad \text{and} \quad
u\mx{(xay)}v \in H \,\, \Leftrightarrow \,\, u\mx{(xby)}v \in H \quad \text{for all } x,y,u,v \in S.
$$
Thus, $\rho \subseteq \tilde{\tau}_H$, as needed.
\end{proof}

We now give an example which demonstrates that  $\tilde{\tau}_H$ can be strictly smaller than $\tau_H$.

\begin{example}\label{exm:j24}
Let $\mathcal{Y}=\{0,P,Q,1\}$
be the semilattice with the Hasse diagram shown in Figure~\ref{fig:Y} and let
$G=\{1_G,g\}$ be the cyclic group of order 2.
\begin{figure}[H]
\centering
\begin{tikzpicture}[scale=1]
    \node (1) at (0,2) {$1$};
    \node (P) at (-1,1.2) {$P$};
    \node (Q) at (1,1.2) {$Q$};
    \node (0) at (0,0.4) {$0$};

    \draw (0) -- (P);
    \draw (0) -- (Q);
    \draw (P) -- (1);
    \draw (Q) -- (1);
\end{tikzpicture}
\caption{The Hasse diagram of the semilattice $\mathcal{Y}$.}
\label{fig:Y}
\end{figure}
We consider the action of $G$ on $\mathcal{Y}$ by
order automorphisms as follows:
$$
g\cdot0=0,\quad g\cdot1=1, \quad
g\cdot P=Q,\quad g\cdot Q=P.
$$
Then the semidirect product $\mathcal{Y}\rtimes G$ is an $F$-inverse monoid with the operations given by
\begin{align*}
\forall i,j\in \{0,1\}\!\colon \quad(A,g^i)(B,g^j)&= (A\wedge (g^i\cdot B), g^{i+j}),\\
(A,g^i)^{-1}&= (g^{-i}\cdot A, g^{-i})=(g^i\cdot A,g^i),\\
\mx{(A,g^i)}&=(1,g^i)
\end{align*}
and the identity element being $(1,1_G)$. 

Let further $H=\{(1,g)\}.$
We first determine the $\tau_H$-classes. 
Let
$$a=(A,g^k),\quad b=(B,g^l)\in \mathcal{Y}\rtimes G.$$
By definition, $a\mathrel{\tau_H} b$ if, for any 
$x=(X,g^i),\, y=(Y,g^j)\in \mathcal{Y}\rtimes G$, we have that $xay\in H$ if and only if $ xby\in H$. We calculate:
$$
xay=(X\wedge (g^i\cdot A)\wedge (g^{i+k}\cdot Y), g^{i+k+j}),
$$
and similarly
$$
xby=(X\wedge (g^i\cdot B)\wedge (g^{i+l}\cdot Y), g^{i+l+k}).
$$
By the definition of $\tau_H$, $H=\{(1,g)\}$ is a $\tau_H$-class. We show that the remaining elements of $\mathcal{Y}\rtimes G$ split into two $\tau_H$-classes:
$$
H' = \{(1,1_G)\}\quad \text{and}\quad
H'' = \{(0,1_G),(P,1_G),(Q,1_G),(0,g),(P,g),(Q,g)\}.
$$

Suppose that $a=(A,g^k), b=(B,g^l)\in H''$ .
Then $A,B \in\{0,P,Q\}$, which implies $g^i\cdot A,\, g^j\cdot B \in\{0,P,Q\}$, so that the first coordinates of $xay$ and $xby$ differ from $1$. Hence, neither of these elements belongs to $H$, and thus $a \mathrel{\tau_H}b$.

Suppose now that $a=(1,1_G)\in H'$ and $b=(B,g^l)\in H''$.
Then 
$$(1,1_G)a(1,g)=(1,g)\in H \quad \text{but} \quad (1,1_G)b(1,g)=(B,g^{l+1})\notin H.$$
Therefore, $a \not \mathrel{\tau_H}b$.
It follows that the $\tau_H$-classes are precisely $H$, $H'$, and $H''$.

To show that $\tilde{\tau}_H$ is properly contained in $\tau_H$, it suffices to show that $a\not\mathrel{\tilde{\tau}_H} b$ for some $a,b\in H''$. 
Let $a= (P,g), b=(P,1_G)\in H''$ and take $x=y=u=v=(1,1_G)$.
Then we have that
$$u\mx{(xay)}v = (1,g) \in H \quad \text{but} \quad u\mx{(xby)}v = (1,1_G) \notin H.$$
Hence, $a\not\mathrel{\tilde{\tau}_H}b$ and therefore $\tilde{\tau}_H\subsetneq\tau_H$. In fact, one can show that $H''$ splits  into two $\tilde{\tau}_H$-classes, namely
$\{(0,1_G),(P,1_G),(Q,1_G)\}$ and $\{(0,g),(P,g),(Q,g)\},$
which implies that the $F$-congruence $\tilde{\tau}_H$ on $ \mathcal{Y}\rtimes G$ is nontrivial.
\end{example}

\section{The greatest idempotent-separating \texorpdfstring{$F$}{F}-congruence}\label{sec:tildemu}

Recall that the
{\em greatest idempotent-separating congruence} $\mu$ on an inverse semigroup $S$ is 
characterized by: 
$$
\forall a,b\in S:\quad  a\mathrel{\mu} b
\quad\Leftrightarrow \quad 
aea^{-1}=beb^{-1}
\quad\text{for all } e\in E(S).
$$

In the following proposition, we describe the greatest idempotent-separating $F$-congruence on an $F$-inverse monoid.

\begin{proposition} \label{prop:tildemu}
Let $S$ be an $F$-inverse monoid.
Define the relation  $\tilde{\mu}$ on $S$ by setting, for all $a,b\in S$,
$a\mathrel{\tilde{\mu}} b$ if and only if $a \mathrel{\mu}b$ and the following condition holds:
$$
\mx{(xay)}e(\mx{(xay)})^{-1}=\mx{(xby)}e(\mx{(xby)})^{-1}
\quad \text{for all } x,y\in S,\, e\in E(S).
$$
Then $\tilde{\mu}$ is the greatest idempotent-separating $F$-congruence.
\end{proposition}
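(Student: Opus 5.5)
The plan follows the pattern of the proof of Proposition~\ref{prop:tildetau}, with $\mu$ in place of $\tau_H$. There are three things to establish. First, $\tilde{\mu}$ is an $F$-congruence. Second, it is idempotent-separating. Third, every idempotent-separating $F$-congruence is contained in it.

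\textbf{Step 1: $\tilde{\mu}$ is an $F$-congruence.} Reflexivity, symmetry and transitivity are clear, because both defining conditions are equalities. For compatibility with multiplication, let $a\mathrel{\tilde{\mu}}b$ and $c\in S$.
\begin{itemize}
\item Since $\mu$ is a congruence, $ac\mathrel{\mu}bc$ and $ca\mathrel{\mu}cb$.
\item Since $y$ ranges over all of $S$, we may replace $y$ by $cy$. This gives the second condition for the pair $ac,bc$. Symmetrically, replacing $x$ by $xc$ gives it for $ca,cb$.
\end{itemize}
For compatibility with $\mx{}$, we need two facts.
\begin{itemize}
\item Taking $x=y=1$ in the second condition gives $\mx{a}e(\mx{a})^{-1}=\mx{b}e(\mx{b})^{-1}$ for all $e\in E(S)$, that is, $\mx{a}\mathrel{\mu}\mx{b}$.
\item By Lemma~\ref{lem:Fid2}(3), $\mx{(x\mx{a}y)}=\mx{(xay)}$ and $\mx{(x\mx{b}y)}=\mx{(xby)}$. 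Hence the second condition for the pair $\mx{a},\mx{b}$ coincides with the one for $a,b$.
\end{itemize}
Thus $\mx{a}\mathrel{\tilde{\mu}}\mx{b}$.

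\textbf{Step 2: $\tilde{\mu}$ is idempotent-separating.} This is immediate from $\tilde{\mu}\subseteq\mu$, since $\mu$ is idempotent-separating.

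\textbf{Step 3: maximality.} Let $\rho$ be an idempotent-separating $F$-congruence and suppose $a\mathrel{\rho}b$. Since $\mu$ is the greatest idempotent-separating congruence, $\rho\subseteq\mu$, so $a\mathrel{\mu}b$. Moreover, for $x,y\in S$,
\[
xay\mathrel{\rho}xby, \quad\text{and hence}\quad \mx{(xay)}\mathrel{\rho}\mx{(xby)},
\]
because $\rho$ is an $F$-congruence. It follows that $\mx{(xay)}\mathrel{\mu}\mx{(xby)}$, which by the characterization of $\mu$ recalled above yields
\[
\mx{(xay)}e(\mx{(xay)})^{-1}=\mx{(xby)}e(\mx{(xby)})^{-1}\quad\text{for all } e\in E(S).
\]
Hence $a\mathrel{\tilde{\mu}}b$, and so $\rho\subseteq\tilde{\mu}$.

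The only delicate point is the $\mx{}$-compatibility in Step 1, which relies on Lemma~\ref{lem:Fid2}(3). The rest is routine. Note that the second condition with $x=y=1$ is exactly what gives $\mx{a}\mathrel{\mu}\mx{b}$; $\mu$ by itself need not be an $F$-congruence, which is why the extra condition is needed.
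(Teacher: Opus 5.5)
Your proposal is correct and follows the paper's proof essentially step by step: substituting $cy$ (resp.\ $xc$) for compatibility with multiplication, taking $x=y=1$ together with Lemma~\ref{lem:Fid2}(3) for $\mx{}$-compatibility, and using $\tilde{\mu}\subseteq\mu$ for idempotent separation. The only cosmetic difference is in the maximality step: you pass through $\rho\subseteq\mu$ and the characterization of $\mu$, whereas the paper notes directly that the conjugated idempotents are $\rho$-related and hence equal because $\rho$ is idempotent-separating.
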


\begin{proof}
We first show that $\tilde{\mu}$ is an $F$-congruence.
It is easy to see that $\tilde{\mu}$ is an equivalence relation, so we only verify it respects the multiplication and the $\mx{}$-operation. Suppose $a\mathrel{\tilde{\mu}} b$ for some $a,b\in S$ and let $c\in S$. 
We first prove that $ac \mathrel{\tilde{\mu}} bc.$
Since $a\mathrel{\mu} b$ and  $\mu$ is a congruence, we have $ac\mathrel{\mu} bc$.
Further, replacing $y$ by $cy$ in the definition of $\tilde{\mu}$ gives
$$
(xacy)^m e((xacy)^m)^{-1}
=
(xbcy)^m e((xbcy)^m)^{-1} \quad \text{for all }x,y\in S,\, e\in E(S).
$$
Hence, $ac \mathrel{\tilde{\mu}} bc$ and by a symmetric argument also  $ca \mathrel{\tilde{\mu}} cb$.
Finally, we show that
$
\mx{a}\mathrel{\tilde{\mu}}  \mx{b}.
$
Taking $x=y=1$ in the definition of $\tilde{\mu}$ immediately gives
$$
\mx{a} e(\mx{a})^{-1}
=
\mx{b} e(\mx{b})^{-1} \quad \text{for all } e\in E(S).
$$ 
Thus, $\mx{a} \mathrel{\mu}\mx{b}$.
Moreover, by Lemma \ref{lem:Fid2}(3) we have
$\mx{(xay)}=\mx{(x\mx{a} y)}$ and similarly  $\mx{(xby)}=\mx{(x\mx{b} y)}$. Therefore, 
$$
\mx{(x\mx{a}y)}e(\mx{(x\mx{a}y)})^{-1}=\mx{(x\mx{b}y)}e(\mx{(x\mx{b}y)})^{-1} \quad \text{for all } x,y\in S,\, e\in E(S),
$$
and hence $\mx{a}\mathrel{\tilde{\mu}}  \mx{b}$, as needed.

It remains to show that $\tilde{\mu}$ is the greatest idempotent-separating $F$-congruence.
Since $\tilde{\mu}\subseteq \mu$ and $\mu$ is idempotent-separating, we have that $\tilde{\mu}$ is 
idempotent-separating as well.
Let now $\rho$ be an idempotent-separating $F$-congruence on $S$ and $a\mathrel{\rho}b$ for some $a,b\in S$. Then $aea^{-1}\mathrel{\rho} beb^{-1}$ and also $\mx{(xay)}e(\mx{(xay)})^{-1}\mathrel{\rho}\mx{(xby)}e(\mx{(xby)})^{-1}$
for all  $x,y\in S$, $e\in E(S)$.
Since $\rho$ is idempotent-separating, it follows that
$$
aea^{-1}= beb^{-1} \text{ and  } \mx{(xay)}e(\mx{(xay)})^{-1}=\mx{(xby)}e(\mx{(xby)})^{-1}\quad \text{for all } x,y\in S, \, e\in E(S).$$ Hence, $a\mathrel{\tilde{\mu}} b$ and therefore $\rho\subseteq \tilde{\mu}$.
\end{proof}

Recall that an inverse semigroup $S$ is called {\em fundamental} if $\mu= {\mathrm{id}}_S$.  
Since a congruence is idempotent-separating if and only if it is contained in the Green's relation $\mathcal{H}$, an inverse semigroup $S$ is fundamental if and only if ${\mathrm{id}}_S$ is the only congruence contained in $\mathcal{H}$.

We call an $F$-inverse monoid $S$ {\em $F$-fundamental} if 
$\tilde{\mu}= {\mathrm{id}}_S$, or, equivalently, if ${\mathrm{id}}_S$ is the only $F$-congruence contained in the Green's relation $\mathcal{H}$.

We now show that for the $F$-inverse monoid from Example~\ref{exm:j24} $\tilde{\mu}$ is the trivial congruence which is properly contained in $\mu$. Consequently, we provide an example of an $F$-inverse monoid which is $F$-fundamental but not fundamental.

\begin{example}\label{exm:j21}
Let $\mathcal{Y}\rtimes G$ be the $F$-inverse monoid from Example~\ref{exm:j24}.
It is easy to see that, for $(A,g^i),(B,g^j)\in \mathcal{Y}\rtimes G$, the Green's relations $\mathcal R$ and $\mathcal L$ are given by
\begin{align*}
(A,g^i) \mathrel{\mathcal{R}}(B,g^j) \quad&\Leftrightarrow \quad A=B, \\
(A,g^i) \mathrel{\mathcal{L}}(B,g^j)\quad&\Leftrightarrow \quad g^i \cdot A = g^j \cdot B.
\end{align*}
This implies that the $\mathcal H$-classes on $ \mathcal{Y}\rtimes G$ are:
$$
H_1 = \{(1,1_G),(1,g)\},\quad
$$
$$
H_2 = \{(P,1_G)\},\quad H_3=\{(P,g)\},\quad
H_4 = \{(Q,1_G)\},\quad H_5=\{(Q,g)\},
$$
$$
H_6 = \{(0,1_G),(0,g)\}.
$$
We now describe the congruences $\mu$ and $\tilde{\mu}$ on $ \mathcal{Y}\rtimes G$.
Since $\tilde{\mu}\subseteq\mu\subseteq\mathcal H$, the $\mathcal H$-classes
$H_2,\,H_3,\,H_4,$ and $H_5$ remain singleton classes for both $\mu$ and
$\tilde{\mu}$. Thus, it remains only to determine whether the elements of
$H_1$ and $H_6$ are $\mu$-related or $\tilde{\mu}$-related.
We first consider $(1,1_G),(1,g)\in H_1$. For $(P,1_G)\in E( \mathcal{Y}\rtimes G)$ we calculate:
$$
(1,1_G)(P,1_G)(1,1_G)^{-1}= (P,1_G),
$$
which differs from 
$$
(1,g)(P,1_G)(1,g)^{-1}= (g\cdot P,1_G)= (Q,1_G).
$$
Hence, $(1,1_G)\not\mu(1,g)$. 
Further, for any
$(C,1_G)\in E( \mathcal{Y}\rtimes G)$ we have:
$$
(0,1_G)(C,1_G)(0,1_G)^{-1}=(0,1_G)\quad \text{and} \quad (0,g)(C,1_G)(0,g)^{-1}=(0,1_G),
$$
so that $(0,1_G)\mathrel{\mu}(0,g)$. 
This shows that $\mu$ is a nontrivial congruence.

We now show that the congruence $\tilde{\mu}$ on $ \mathcal{Y}\rtimes G$
is trivial.
For this, it suffices to show that $(0,1_G)$ and $(0,g)$ are not $\tilde{\mu}$-related.   
Taking $x=y=(1,1_G)$ in the definition of $\tilde{\mu}$ yields
$$
\mx{(0,1_G)}(P,1_G)(\mx{(0,1_G)})^{-1} = (1,1_G)(P,1_G)(1,1_G)= (P,1_G),
$$
which differs from 
$$
\mx{(0,g)}(P,1_G)(\mx{(0,g)})^{-1} = (1,g)(P,1_G)(1,g)= (Q,1_G).
$$
Therefore, $(0,1_G)\not\mathrel{\tilde{\mu}} (0,g)$. 
Hence, $\tilde{\mu}$ is the trivial congruence. 
Together with the fact that $\mu$ is nontrivial, this implies that $\tilde{\mu}$ is properly contained in $\mu$.
\end{example}

\section{\texorpdfstring{$F$}{F}-congruence-free \texorpdfstring{$F$}{F}-inverse monoids}\label{sec:congruencefree}

Recall that an inverse semigroup $S$ is called {\em congruence-free} if it does not have any congruences apart from ${\mathrm{id}}_S$ and $\Delta_S$.

\begin{definition}($F$-congruence-free $F$-inverse monoid)
We call an $F$-inverse monoid $S$ {\em $F$-congruence-free} if it does not have any $F$-congruences apart from ${\mathrm{id}}_S$ and $\Delta_S$.
\end{definition}

Every $F$-congruence on an $F$-inverse monoid is by definition a congruence.
Hence, every congruence-free $F$-inverse monoid is $F$-congruence-free.

\begin{lemma}\label{lem:a10}
 Suppose $S$ is an $E$-unitary inverse semigroup.
\begin{enumerate}
\item If $\sigma= {\mathrm id}_S$, then $S$ is a group.
\item If $\sigma= \Delta_S$, then $S$ is a semilattice.
\end{enumerate}
\end{lemma}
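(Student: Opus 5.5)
Both parts follow directly from the description of $\sigma$ recalled in Section~\ref{sec:preliminaries}: $s\mathrel{\sigma}t$ if and only if $es=et$ for some $e\in E(S)$. In addition, since $S$ is $E$-unitary, $E(S)$ is a single $\sigma$-class. I will treat the two parts separately, each time using these two facts.

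For (1), suppose $\sigma={\mathrm{id}}_S$. All idempotents are $\sigma$-related to one another, since $ef\cdot e=ef\cdot f$ for any $e,f\in E(S)$. With $\sigma$ trivial this gives $e=f$, so $E(S)$ has exactly one element, say $e$. An inverse semigroup with a single idempotent is a group, with identity $e$: for every $a\in S$ we have $aa^{-1}=a^{-1}a=e$, so $ea=aa^{-1}a=a$ and $ae=a$. The $E$-unitary assumption is not needed here.

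For (2), suppose $\sigma=\Delta_S$. Then every $a\in S$ is $\sigma$-related to any fixed idempotent, for instance $aa^{-1}$. Since $E(S)$ is a whole $\sigma$-class, this forces $a\in E(S)$. Thus $S=E(S)$, and $S$ is a semilattice.

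Neither step is a real obstacle. The only point that needs care is in (1): that all idempotents are $\sigma$-related should be justified directly by the displayed identity $ef\cdot e=ef\cdot f$, not by appealing to $E$-unitarity, since that is what makes the argument work without the hypothesis.
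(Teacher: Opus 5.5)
Your proof is correct and essentially matches the paper's. Part (2) is exactly the paper's argument: $S$ is a single $\sigma$-class, and $E(S)$ is a $\sigma$-class by $E$-unitarity, so $S=E(S)$. For part (1) the paper only says $S$ is ``clearly'' a group (because $S\cong S/\sigma$); your unique-idempotent argument is a valid explicit check of that step, and you are right that $E$-unitarity is not needed there.
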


\begin{proof}
If $\sigma={\mathrm{id}}_S$, then $S$ is clearly a group.
If $\sigma=\Delta_S$, then $S$ is a $\sigma$-class. Since $S$ is $E$-unitary, $E(S)$ is a $\sigma$-class, and hence $S=E(S)$ is a semilattice.
\end{proof}

\begin{theorem}\label{thm:nocongfree}
Let $S$ be an $E$-unitary inverse semigroup. The following statements are equivalent.
\begin{enumerate}
\item $S$ is an $F$-congruence-free $F$-inverse monoid.
\item $S$ is congruence-free.
\item $S$ is either a simple group or a semilattice with at most two elements.
\end{enumerate}  
\end{theorem}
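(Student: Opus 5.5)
I will prove the cycle $(1)\Rightarrow(3)\Rightarrow(2)\Rightarrow(1)$.

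\emph{$(1)\Rightarrow(3)$.} Let $S$ be an $F$-congruence-free $F$-inverse monoid. By Lemma~\ref{lemma:Fsigma}, $\sigma$ is an $F$-congruence, so $\sigma={\mathrm{id}}_S$ or $\sigma=\Delta_S$. Since $S$ is $E$-unitary, Lemma~\ref{lem:a10} applies.
\begin{itemize}
\item[(a)] If $\sigma={\mathrm{id}}_S$, then $S$ is a group. By Lemma~\ref{lem:j23a} every congruence on $S$ is an $F$-congruence. Hence $S$ is congruence-free as a group, that is, simple.
\item[(b)] If $\sigma=\Delta_S$, then $S$ is a semilattice. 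It has a maximum element because it is an $F$-inverse monoid, so it is a semilattice monoid. By Lemma~\ref{lem:j23a}, $S$ is again congruence-free.
\end{itemize}
It remains to show that a congruence-free semilattice $Y$ has at most two elements. Fix $e\in Y$. The principal ideal $eY=\{f\colon f\le e\}$ gives a Rees congruence. If $e$ is not the minimum of $Y$, this congruence is not the identity, provided $eY$ has at least two elements. If $eY\neq Y$, it is not universal either. So the Rees congruence of every principal ideal with at least two elements must be universal, which forces $eY=Y$.

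Now suppose $|Y|\ge 3$. Take $e\ne 1$ with $eY$ having at least two elements. If no such $e$ exists, every element below $1$ is minimal. Two distinct minimal elements would have a product below both, a contradiction. So $Y=\{0,1\}$ in that case.

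A cleaner route is the map $f\mapsto$ [$f\ge e$], which is a semilattice homomorphism onto $\{0,1\}$ for any $e$. Its kernel is a congruence. It is nontrivial as soon as $Y$ has an element not above $e$. It is non-injective whenever more than one element lies above $e$, or more than one lies outside the filter of $e$.

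With $|Y|\ge3$, take $e$ to be an element other than $1$ and other than the minimum, if one exists. Then the filter of $e$ contains $e$ and $1$, while the minimum lies outside it, so the kernel is neither trivial nor universal. If no such $e$ exists, then $Y$ consists of $1$ together with pairwise incomparable elements below it, which again contradicts the existence of meets.

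This case analysis on $Y$ is the main (elementary) obstacle, and I will write it carefully.

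\emph{$(3)\Rightarrow(2)$.} A simple group has only the trivial and universal congruences, since congruences on a group correspond to normal subgroups. A semilattice with at most two elements has only two equivalence relations.

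\emph{$(2)\Rightarrow(1)$.} Let $S$ be congruence-free and $E$-unitary. Applying Lemma~\ref{lem:a10} to $\sigma$, which is now either ${\mathrm{id}}_S$ or $\Delta_S$, shows that $S$ is a group or a semilattice. In the semilattice case, the argument above shows $|S|\le 2$, so $S$ has a maximum element.

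In both cases $S$ is an $F$-inverse monoid by Lemma~\ref{lem:j23a}. It is $F$-congruence-free because every $F$-congruence is a congruence.
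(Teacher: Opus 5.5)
Your argument is correct in substance and uses exactly the paper's ingredients. These are: $\sigma$ is an $F$-congruence (Lemma~\ref{lemma:Fsigma}); Lemma~\ref{lem:a10} reduces to a group or a semilattice; Lemma~\ref{lem:j23a} identifies congruences with $F$-congruences in those two cases; and Rees congruences of ideals bound the size of a congruence-free semilattice.

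The only real difference is the order of the cycle. The paper proves $(1)\Rightarrow(2)\Rightarrow(3)\Rightarrow(1)$, so the size bound is proved once, for an arbitrary congruence-free semilattice, and $(3)\Rightarrow(1)$ is immediate. With your order $(1)\Rightarrow(3)\Rightarrow(2)\Rightarrow(1)$, you must redo the classification inside $(2)\Rightarrow(1)$, just to know that $S$ is a monoid. There your appeal to ``the argument above'' does not literally apply: both of your semilattice arguments use the maximum $1$, which is not yet available in that implication.

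The repair is already in your first step. If $|eY|\ge 2$, the Rees congruence of $eY$ is not the identity, hence universal, so $eY=Y$ and $e$ is the maximum of $Y$. Thus every element other than a possible maximum is minimal. Two distinct minimal elements $f,g$ would give $fg=f=g$, so $|Y|\le 2$ with no monoid assumption. Likewise, in the filter argument, just take $e$ to be any element that is not the minimum.

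When writing it up, keep only one of the two semilattice arguments. As written, the sentences ``So $Y=\{0,1\}$ in that case'' and ``$Y$ consists of $1$ together with pairwise incomparable elements below it'' do not follow from what precedes them. In the second, the absence of such an $e$ simply gives $|Y|\le 2$ directly.
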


\begin{proof}
$(1)\Rightarrow(2)$ 
Suppose $S$ is an $F$-congruence-free $F$-inverse monoid and suppose $\rho$ is a congruence on $S$ distinct from ${\mathrm{id}}_S$ and $\Delta_S$. Since $\sigma$ is an $F$-congruence, it must coincide with either ${\mathrm{id}}_S$ or $\Delta_S$. By Lemma~\ref{lem:a10}, it follows that $S$ is then
either a group or a semilattice. 
Then, by Lemma~\ref{lem:j23a}, $\rho$ is an $F$-congruence, contradicting the assumption that $S$ is $F$-congruence-free. Therefore, $S$ is congruence-free.

$(2)\Rightarrow(3)$ 
Suppose $S$ is congruence-free. 
Then the congruence $\sigma$ coincides with  one of the congruences ${\mathrm{id}}_S$ or $\Delta_S$, and hence, by Lemma~\ref{lem:a10}, $S$ is either a group or a semilattice. If $S$ is a group, the assumption implies that it must be simple. If $S$ is a semilattice, it cannot have a proper nonzero ideal 
since the latter leads to a nontrivial and non-universal Rees congruence.
Hence, $S$ must be a semilattice with at most two elements.

$(3)\Rightarrow(1)$
Let $S$ be a simple group or a semilattice with at most two elements. First observe that, by Lemma~\ref{lem:j23a}, $S$ is $F$-inverse.
Further, if $S$ is a simple group, it is clearly congruence-free. 
If $S$ is a semilattice with at most two elements, 
it is easy to see that its only congruences are the trivial and the universal ones.
Hence, in both cases, $S$ is congruence-free, 
and therefore also $F$-congruence-free.
\end{proof}

\section*{Acknowledgments}
I am grateful to my PhD supervisor Ganna Kudryavtseva for suggesting this topic and for her valuable advice and guidance during the preparation of this paper.

\end{document}